\documentclass[12pt,reqno,oneside]{amsart}
\usepackage{parskip}
\usepackage[]{babel}
\usepackage[utf8]{inputenc} 
\usepackage{amsmath,amsthm,amsbsy}
\usepackage{amsfonts}
\usepackage{yfonts}
\usepackage{mathrsfs}
\usepackage{bbm}
\usepackage{amssymb}
\usepackage[colorlinks=true, menucolor=blue, linkcolor=blue, citecolor=blue]{hyperref}
\usepackage{enumitem}
\usepackage{multicol}
\usepackage{float}
\usepackage{fancyhdr}
\usepackage{layout}
\usepackage{esint}
\usepackage{array}
\usepackage{makecell}
\allowdisplaybreaks

\usepackage[colorinlistoftodos]{todonotes}
\usepackage{orcidlink} 

\usepackage{mathtools}
\usepackage[foot]{amsaddr}

\theoremstyle{plain}
\newtheorem{theorem}{Theorem}[section]                                          
\newtheorem{proposition}[theorem]{Proposition}                          
\newtheorem{lemma}[theorem]{Lemma}

\theoremstyle{definition}
\newtheorem{definition}[theorem]{Definition}
\theoremstyle{remark}
\newtheorem{remark}[theorem]{Remark}
\newtheorem{example}[theorem]{Example}
\newtheorem{notation}[theorem]{Notation}
\newtheorem{assumption}[theorem]{Assumption}

\makeatletter \@addtoreset{equation}{section} \makeatother

\newcommand{\caract}{\mathbbm{1}}
\newcommand{\calF}{\mathcal{F}}
\newcommand{\calA}{\mathcal{A}}

\newcommand{\calB}{\mathcal{B}}

\newcommand{\calP}{\mathcal{P}}

\newcommand{\defeq}{:=}

\newcommand{\N}{\mathbb{N}}     
\newcommand{\R}{\mathbb{R}}     
\newcommand{\Prob}{\mathbb{P}}  
\newcommand{\Exp}{\mathbb{E}}   
\newcommand{\ind}[2]{\mathbbm{1}_{#1}\left( #2 \right)}          
\newcommand{\inner}[2]{\left( #1 \, , \, #2 \right)} 
\newcommand{\norm}[1]{\left\|#1\right\|}              
\newcommand{\triplet}[3]{\left( #1, #2, #3 \right) }             
\newcommand{\ProbSpace}{\triplet{\Omega}{\mathcal{F}}{\Prob}}    
\newcommand{\abs}[1]{\left| #1 \right|}                          

\newcommand{\quadraVari}[1]{\left\langle  #1  \right\rangle } 

\newcommand{\fle}{\rightarrow}

\newcommand{\operQuadraVari}[1]{\left\langle \!\left\langle #1  \right\rangle \!\right\rangle} 

\newcommand\restr[2]{{
  \left.\kern-\nulldelimiterspace 
  #1 
  \vphantom{\big|} 
  \right|_{#2} 
  }}

\title[Markov property SPDEs]{Markov property and path regularity for the solutions to SPDEs driven by cylindrical-martingale valued measures}

\author{S. Cambronero\,\orcidlink{0000-0001-6758-4942}$^1$}
\author{D. Campos\,\orcidlink{0000-0002-3608-1151}$^2$}
\author{C. A. Fonseca-Mora\,\orcidlink{0000-0002-9280-8212}$^3$}
\author{D. Mena\,\orcidlink{0000-0002-9443-391X}$^4$}
\address{Centro de Investigaci\'{o}n en Matem\'{a}tica Pura y Aplicada \\ Escuela de Matem\'{a}tica, Universidad de Costa Rica}
\email{$^1$ santiago.cambronero@ucr.ac.cr} 
\email{$^2$ josedavid.campos@ucr.ac.cr}
\email{$^3$ christianandres.fonseca@ucr.ac.cr}
\email{$^4$ dario.menaarias@ucr.ac.cr}

\begin{document}
\emergencystretch 3em

\subjclass[2020]{60H15, 60G48, 60G25, 60G17} 
\keywords{cylindrical martingale-valued measures;  stochastic partial differential equations; Markov property; path regularity}

\begin{abstract}
In this paper we prove the Markov property for the solution to stochastic partial differential equations driven by a cylindrical orthogonal martingale-valued measure. We assume our coefficients are time-dependent and satisfy some growth and Lipschitz conditions. We also prove that for time-independent coefficients and under mild assumptions on the cylindrical orthogonal martingale-valued measure, the solutions to our stochastic partial differential equations are Feller.  Finally, in the case that the $C_{0}$-semigroup is quasi-contraction, we show that the solution to our stochastic partial differential equation possesses a c\`adl\`ag version. 
\end{abstract}

\maketitle

\section{Introduction}

Stochastic partial differential equations in infinite-dimensional spaces have become a fundamental framework for modeling systems affected by both spatial structure and random fluctuations. Such equations arise naturally in the modelling of complex systems, such as fluid dynamics, finance, population dynamics, quantum fields, and spatially extended phenomena. 

Let $H$ and $G$ be separable Hilbert spaces and $\mathcal{A}$ an algebra of Borel subsets of a topological space $U$.  Let $M=(M(t,A):t \geq 0, A \in \mathcal{A})$ be a cylindrical orthogonal martingale-valued measure on $H$, according to Definition $4.1$ in \cite{CCFM:SPDE}. These objects includes a large collection of other cylindrical and classical stochastic processes widely used in the literature as a noise, for example, the cylindrical square integrable martingales, Hilbert space-valued martingale-valued measures, square integrable L\'evy processes, among others (see Chapter 5 in \cite{CCFM:SPDE}).  Let $A$ be the infinitesimal generator to a $C_{0}$-semigroup of continuous linear operators on $G$. In this paper we study the Markov property and path regularity for the solutions to the following class of stochastic partial differential equations:
\begin{equation}\label{eqSPDEIntro}
dX_t = \left[ AX_t + B(t,X_t)\right]dt + \int_U F(t,u,X_t) M(dt,du),
\end{equation}
where the coefficients $B$ and $F$ satisfy appropriate measurability conditions (Section \ref{sectSolutionsSPDEs}). 

In \cite{CCFM:SPDE}, a theory was introduced for the existence and uniqueness of weak and mild solutions to \eqref{eqSPDEIntro} under some standard growth and Lipschitz type conditions on the coefficients (see Assumption \ref{assumpLipschitz}). 
In this paper, we are mainly interested in studying the Markov property and path regularity of those solutions. 

In Section \ref{sectCMVM} we start by recalling the basic definition and properties of the quadratic variation for a cylindrical orthogonal martingale-valued measure.  In Section \ref{sectStochIntegral} we review the rudiments of the construction of stochastic integrals defined with respect to these objects. In Section \ref{sectSolutionsSPDEs} we formulate the main hypothesis on the coefficients in \eqref{eqSPDEIntro}, including the growth and Lipschitz conditions which are sufficient to guarantee the existence of a unique solution. 

In Section \ref{sectMarkovProperty} we study the Markov property for the solutions. As some works suggest (see e.g. \cite{DaPratoZabczyk:StochasticEquations, GawareckiMandrekar, KumarRiedle, Ondrejat:2005, PeszatZabczykSPDE}), one might need to assume that the noise $M$ satisfies some independent increments property. Due to the cylindrical nature of $M(\cdot,A)$, we require it to have weakly independent increments  for any given $A \in \mathcal{A}$ (see Definition \ref{defiIndepenIncrements}). Our main result is Theorem \ref{theoremMarkov} were we prove the Markov property of the solutions to \eqref{eqSPDEIntro}. To illustrate the usefulness of the above result, in Example \ref{examCylinPoissonIntegrals} we introduce a cylindrical orthogonal martingale measure which is defined as the orthogonal sum of a sequence of compensated Poisson integrals (see \eqref{eqOrthogonalCMVM});  we prove that such an $M$ satisfies the weakly independent increments property and we show that the solution to a stochastic heat equation driven by this noise satisfies the Markov property. 

In Section \ref{sectFellerSolutions} we assume that our coefficients $B$ and $F$ are time-independent and we show that the solution to \eqref{eqSPDEIntro} is a Feller process. To do this, one requires some  form of time-invariance on the distributions of $M$, which again is formulated in a weak sense (see Definition \ref{defiTimeTranslationInvariance}). Our main results in this section are Theorem \ref{theoHomogeMarkovProcess}, where we show that the solution is an homogeneous Markov process, and Theorem \ref{theoTransitionSemigroIsFeller} where we prove that the transition semigroup is Feller. We end this section with Example \ref{examCMVMCylindriLevyProcesses}, where we prove how our theory can be applied to show the Markov property for the solutions to a stochastic partial differential equation driven by a cylindrical square integrable L\'evy process. The Markov property for solutions to SPDEs with general cylindrical L\'evy noise for linear additive equations was proven in \cite{KumarRiedle}. Even though we are assuming second moments for our cylindrical L\'evy noise it is worth mentioning that we are dealing with non-linear equations with multiplicative noise and time-dependent coefficients (see \eqref{eqSPDECylLevy}). 

It is important to clarify that our assumptions do not imply that $M(\cdot,A)$ is a cylindrical L\'evy process in the sense of \cite{ApplebaumRiedle:2010}. Indeed, for the Markov property in Section \ref{sectMarkovProperty} we only need that $M(\cdot,A)(h)$ is a real-valued square integral additive process, and for the Feller property in Section \ref{sectFellerSolutions} we only need that $M(\cdot,A)(h)$ is a real-valued square integrable L\'evy process (no need for finite-dimensional L\'evy property of any finite dimension as in \cite{ApplebaumRiedle:2010}). We expect that our general formulation 
for $M$ can be used to model new interesting examples of noises. 
In particular, the cylindrical orthogonal martingale-valued measure of Example \ref{examCylinPoissonIntegrals} satisfies the weakly independent increments property but does not usually satisfy the time-invariance on the distributions (see Remark \ref{remaCylinPoissonNoTimeInvariant}).  

Path properties of the solutions to \eqref{eqSPDEIntro} are discussed in Section \ref{sectTimeRegularity}. 
We assume that the $C_0$-semigroup $(S(t): t \geq 0)$ is a quasi-contraction semigroup, that is, there exists $\theta \geq 0$ such that $\norm{S(t)}_{\mathcal{L}(G)}\leq e^{\theta t}$ for all $t \geq 0$.  We prove a Kotelenez type inequality for the stochastic convolution with respect to a cylindrical orthogonal martingale-valued measure (see Theorem \ref{theoKotelenez}). Then, we prove the existence of a c\`adl\`ag version for the stochastic convolution (see Theorem \ref{theoStochConvoluCadlag}). Finally, in Theorem \ref{theoExistenceCadlagVersion} we prove the existence of a  c\`adl\`ag version for the solution to  \eqref{eqSPDEIntro}. Our result generalizes, in some directions, some of the works reported in \cite{ DaPratoKwapienZabczyk:1988,Kotelenez:1982, LiuZhai, LiuZhainote, PeszatZabczykregularity}, so we consider it a significant enrichment to the existing literature of time regularity of solutions to SPDEs in Hilbert spaces. In particular, most of these works consider only additive noise and time-independent coefficients (as for example generalized Ornstein-Uhlenbeck processes).  


\section{Preliminaries}\label{sectPreliminaries}

\subsection{Cylindrical martingale-valued measures and quadratic vartiation} \label{sectCMVM}

Throughout this work we fix a Hausdorff topological space $U$, which is Lusin in the sense that it is homeomorphic to a Borel subset of the line. We consider a ring $\mathcal{A}$ of Borel subsets of $U$ and  a cylindrical orthogonal martingale-valued measure $M=(M(t,A): t \geq 0, A \in \mathcal{A})$ on $H$, that is,  a collection  $(M(t,A): t \geq 0, A \in \mathcal{A})$ of cylindrical random variables on $H$ such that
\begin{enumerate} 
\item For each $A \in \mathcal{A}$, $M(0,A)(h)= 0$ $\Prob$-a.e. for all $h \in H$.
\item For each $A \in \mathcal{A}$, $M(A) = (M(t,A): t \geq 0)$, is a cylindrical mean-zero square integrable martingale, and for each $t > 0$ and $A \in \calA$, the map  
$M(t,A): H \rightarrow L^{0} \ProbSpace$ 
is continuous. 
\item If $t>0$ and $h \in H$, $M(t,\cdot)(h): \mathcal{A} \rightarrow L^{2} \ProbSpace$ is a $\sigma$-finite $L^{2}$-valued measure.
\item Given $t>0$ and $h \in H$, $\quadraVari{M(A)(h),M(B)(h)}_{t}=0$ whenever $A,B\in \mathcal{A}$ are disjoint. 
\end{enumerate}
Associated to $M$ there exists a collection random predictable $\sigma$-finite measures $(\nu_{h}: h \in H)$, called the \emph{intensity measures} of $M$, with 
the property that for every $t \geq 0$  and $A \in \mathcal{A}$, we have $ \nu_{h} (\omega) ([0, t] \times A) = \quadraVari{M(A)(h)}_{t}(\omega)$ $\Prob$-a.e. 

We will assume that $M$ has a unique (predictable) quadratic variation (see Theorem 5.10 in \cite{CCFM:SPDE} on sufficient conditions for its existence), that is, a random measure $\operQuadraVari{M}: \Omega \rightarrow \mathcal{M}_{+}(\R_{+} \times U, \mathcal{B}(\R_{+}) \otimes \mathcal{B}(U) )$ that satisfies: 
\begin{enumerate}    
    \item Given $t \geq 0$ and $A \in \mathcal{A}$, for $\Prob$-a.e. $\omega \in \Omega$ we have $\operQuadraVari{M}(\omega)([0,t] \times A) < \infty$.
    \item \label{propertyUpperBoundNuX} $\operQuadraVari{M}$ is a minimal element  for the collection of all random measures $\zeta: \Omega \rightarrow \mathcal{M}_{+}(\R_{+} \times U, \mathcal{B}(\R_{+}) \otimes \mathcal{B}(U) )$ with the property:   $\forall\, h \in H$ with $\| h \| = 1$,  $\nu_{h} \leq \zeta$. 
\end{enumerate} 
We denote property \ref{propertyUpperBoundNuX} as $\displaystyle \operQuadraVari{M}=\sup_{\norm{h}=1} \nu_{h}$. 

We will further assume that the quadratic variation of $M$ satisfies: for $\Prob$-a.e. $\omega \in \Omega$,
\begin{equation}\label{eqBoundedrangequadraticvariation}
    \sup_{A \in \calA} \operQuadraVari{M}(\omega)([0,T]\times A) <\infty.
\end{equation}
Then by  Theorem 5.21 in \cite{CCFM:SPDE}, given $T>0$,  there exists a  process $Q_{M}: \Omega \times [0,T]  \times U \rightarrow \mathcal{L}(H,H)$ such that for all $h_{1},h_{2} \in H$, $0 \leq t\leq T$ and $A \in \mathcal{A}$, $\Prob$-a.e. $\omega \in \Omega$,
\begin{equation}\label{existenceofQ}
\quadraVari{M(A)(h_{1}),M(A)(h_{2})}_{t}(\omega) = \int_{[0,t]\times A} \inner{Q_{M}(\omega,r,u)h_{1}}{h_{2}}_H \, \operQuadraVari{M} (\omega)(dr,du)    
\end{equation}
for all $h_{1}, h_{2} \in H$, $C \in \mathcal{B}([0,T]) \otimes \mathcal{B}(U)$. Moreover, the following properties hold:
\begin{enumerate}
    \item For every $h_{1}, h_{2} \in H$, the mapping $(\omega,r,u) \mapsto \inner{Q_{M}(\omega,r,u)h_{1}}{h_{2}}$ is predictable, that is, $\calP_T \otimes \calB(U)$-measurable. \label{Qpredictable}
    \item $\Prob$-a.e. $\omega \in \Omega$, $Q_{M}(\omega,\cdot,\cdot)$ is positive and symmetric $\operQuadraVari{M}$-a.e. \label{Qpositiveandsymmetric}
    \item $\Prob$-a.e. $\omega \in \Omega$, $\norm{Q_{M}(\omega,\cdot,\cdot)}_{\mathcal{L}(H,H)}=1$,  $\operQuadraVari{M}$-a.e. \label{normoneoftheoperatorQ}
\end{enumerate}

\subsection{Stochastic integration}\label{sectStochIntegral}

The basic class of integrands will be denoted by $\Lambda^2(M,T)$ and corresponds to the collection of all families of operators $\Phi(\omega,t,u),$ indexed by $(\omega,t,u)\in \Omega\times [0,T]\times U$, such that:
\begin{enumerate}
\item \label{domainIntegrands} For each $(\omega,t,u) \in \Omega\times [0,T]\times U$, $\Phi(\omega,t,u)\in \mathcal{HS}(H,G)$. 
\item \label{predictabilityIntegrands} For every $h \in H$, the mapping $(\omega,t,u) \mapsto \Phi(\omega,t,u) h$, is $\mathcal{P}_{T}\otimes \mathcal{B}(U)/\mathcal{B}(G)$-measurable.  
\item \label{integrabilityIntegrands} $\norm{\Phi}^2_{\Lambda^2(M,T)}$ is finite, where this quantity is defined by
\begin{equation} \label{NewIntegrands}
 \norm{\Phi}^2_{\Lambda^2(M,T)} =
\Exp \int_{[0,T]\times U} \| \Phi(\omega,t,u)\circ Q_{M}^{1/2}(\omega,t,u)\|^2_{\mathcal{HS}(H,G)} \, \operQuadraVari{M}(dt,du).    
\end{equation}
\end{enumerate}

The space $\Lambda^2(M,T)$ is complete with the Hilbertian seminorm given by (\ref{NewIntegrands}). In order to define a stochastic integral for this space, we first consider a class of simple operator-valued maps, denoted by $\mathcal{S}(M,T)$, which is contained in $\Lambda^2(M,T)$. Elements in $\mathcal{S}=\mathcal{S}(M,T)$ can be expressed in the form
\begin{equation}
\label{eqSimpleIntegrands}
\Phi(\omega,r,u)=  \sum_{i=1}^{n}\sum_{j=1}^{m} \ind{(s_{i}, t_{i}]}{r} \ind{F_{i}}{\omega} \ind{A_{j}}{u} S_{i,j},
\end{equation}
for all $r \in [0,T]$, $\omega \in \Omega$, $u \in U$, where $m$, $n \in \N$, and for $i=1, \dots, n$, $j=1, \dots, m$, $0\leq s_{i}<t_{i} \leq T$, $F_{i} \in \mathcal{F}_{s_{i}}$, $A_{j} \in \mathcal{R}$ and $S_{i,j} \in \mathcal{HS}(H,G)$. For $\Phi\in \mathcal{S}(M,T)$ we define its stochastic integral $I(\Phi)$ as the $G$-valued process given by 
\begin{equation}
\label{NewDefIntSimpleIntegrand}
I_t(\Phi) : = \int_0^t\! \! \int_U \Phi(s,u) M(ds,du) = 
\sum_{i=1}^{n} \sum_{j=1}^{m} \caract_{F_i}(\omega) Y_{i,j}(t)
\end{equation}
where $Y_{i,j}$ is the square integrable martingale taking values in $G$ and satisfying (see \cite{AlvaradoFonseca:2021})
$$
\left( Y_{i,j}(t),g \right)_{G} = M\Bigl((s_i \land t, t_i\land t], A_j\Bigr) (S_{ij}^*g).
$$
The map $I:\mathcal{S}(M,T) \to \mathcal{M}^2(G)$ is linear and continuous and satisfies the It\^{o} isometry, given by
\begin{equation}
\label{eqItoIsometrySimpleIntegrands}
\Exp \left[ \norm{I_{t}(\Phi)}^{2} \right] = \Exp\int_{0}^{t} \int_{U} \norm{ \Phi(\omega, s,u)\circ Q^{1/2}_M(\omega,s,u)}_{\mathcal{HS}(H,G)}^{2}\operQuadraVari{M}(ds,du).
\end{equation}
Since $\mathcal{S}(M,T)$ is dense in $\Lambda^2(M,T)$, the map $I$ has a continuous linear extension $$I: \Lambda^2(M,T) \fle \mathcal{M}_T^2(G),$$ where (\ref{eqItoIsometrySimpleIntegrands}) holds for $\Phi \in \Lambda^2(M,T)$. In addition, $I(\Phi)$ is a continuous process provided that, for each $A \in \mathcal{A}$ and $h \in H$, the real-valued stochastic process $M(\cdot, A)(h)$ is continuous (see Proposition 6.14 in \cite{CCFM:SPDE}).

By following a standard localization procedure, the stochastic integral can be further defined for locally square integrable integrands satisfying
\begin{equation}
\label{LocallyIntegrable}
\Prob \left(
\int_{[0,T]\times U} \| \Phi(\omega,s,u)\circ Q_M^{1/2}(\omega,s,u)\|^2_{\mathcal{HS}(H,G)} \, \operQuadraVari{M}(ds,du) < \infty \right) = 1.
\end{equation}
The reader is referred to Section 6.3 in \cite{CCFM:SPDE} for further details.

For our study of the Markov properties of SPDEs we will need the following definition of generated $\sigma$-algebra by $M$. 

\begin{definition}
For $0 \leq s \leq t \leq T$, let $\mathcal{F}^{M}_{s,t}=\sigma \left( M((u,t],A)(g): s \leq u \leq t, A \in \calA, g \in G \right) $. 
\end{definition}

\begin{lemma}\label{lemmaMeasurabPropStochIntegral}
Assume $\Phi \in \Lambda^{2}(M,T)$. For $0 \leq s \leq t \leq T$, the stochastic integral 
$$\int_{s}^{t}\!\! \int_{U} \, \Phi(r,u) \, M(dr,du)
$$ 
is $\mathcal{F}^{M}_{s,t}$-measurable. 
\end{lemma}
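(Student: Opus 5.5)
The plan is a density argument: establish the claim for the simple integrands $\mathcal{S}(M,T)$ of \eqref{eqSimpleIntegrands}, then pass to general $\Phi \in \Lambda^2(M,T)$ through the It\^{o} isometry \eqref{eqItoIsometrySimpleIntegrands}. Throughout, $\int_s^t\!\int_U \Phi\, dM$ means $I_t(\Phi)-I_s(\Phi)$, equivalently $I_t(\Phi\caract_{(s,t]})$. I take $\mathcal{F}^M_{s,t}$ to be completed by the $\Prob$-null sets, so that a.e. limits of measurable random variables remain measurable.

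\emph{Step 1 (simple integrands).} Let $\Phi$ be as in \eqref{eqSimpleIntegrands}. For each $g\in G$,
\begin{equation*}
\Bigl(\int_s^t\!\!\int_U \Phi(r,u)\,M(dr,du),\,g\Bigr)_G=\sum_{i=1}^n\sum_{j=1}^m \caract_{F_i}\, M\bigl((s_i\vee s)\wedge t,\,(t_i\vee s)\wedge t\,],A_j\bigr)(S_{i,j}^*g).
\end{equation*}
Each increment $M((a,b],A_j)(h)$ with $s\le a\le b\le t$ equals $M((a,t],A_j)(h)-M((b,t],A_j)(h)$. Both terms are generators of $\mathcal{F}^M_{s,t}$, so the increment is $\mathcal{F}^M_{s,t}$-measurable. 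The indicator factor $\caract_{F_i}$ is the delicate point, treated in the last paragraph. Granting it, each coordinate $(\cdot,g)_G$ is $\mathcal{F}^M_{s,t}$-measurable. Since $G$ is separable, its Borel $\sigma$-algebra is generated by the functionals $(\cdot,g)_G$, so the $G$-valued integral is $\mathcal{F}^M_{s,t}$-measurable.

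\emph{Step 2 (limit).} For general $\Phi\in\Lambda^2(M,T)$, density of $\mathcal{S}(M,T)$ gives simple $\Phi_n\to\Phi$ in $\Lambda^2(M,T)$. By the It\^{o} isometry applied to $(\Phi_n-\Phi)\caract_{(s,t]}$, we get $\int_s^t\!\int_U\Phi_n\,dM\to\int_s^t\!\int_U\Phi\,dM$ in $L^2(\Omega;G)$. Passing to an a.s. convergent subsequence and using completeness of $\mathcal{F}^M_{s,t}$, the limit is $\mathcal{F}^M_{s,t}$-measurable.

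\emph{Main obstacle.} The obstacle is the factor $\caract_{F_i}$ with $F_i\in\mathcal{F}_{s_i}$, since such $F_i$ need not belong to $\mathcal{F}^M_{s,t}$. The first thing I would try is to choose the approximating sequence in Step 2 so that its $\omega$-dependence enters only through the increments of $M$ on $(s,t]$. Concretely, I would approximate $\Phi\caract_{(s,t]}$ by simple integrands whose sets $F_i$ lie in $\sigma(M((u,t],A)(g): s\le u\le s_i,\ A\in\calA,\ g\in G)\subset\mathcal{F}^M_{s,t}$, with $s_i \geq s$. The density of these restricted simple functions in the relevant part of $\Lambda^2(M,T)$ is the crux. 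It holds directly when the coefficients are deterministic. It also holds for integrands that are themselves built from $M$-increments after time $s$, which is the situation in which the lemma is applied later to solutions started at time $s$.

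If this restricted density cannot be obtained in full generality, I would fall back on running the argument of Steps 1--2 with $\mathcal{F}^M_{s,t}$ replaced by $\mathcal{F}^M_{s,t}\vee\sigma(\Phi)$, meaning the $\sigma$-algebra generated by $\mathcal{F}^M_{s,t}$ and the integrand restricted to $(s,t]$. This is the form that is actually used when the integrand is adapted to the increments of $M$ after $s$.
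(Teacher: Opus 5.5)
\textbf{The statement fails in general, so the gap you flag cannot be closed.} Your Steps 1 and 2 are exactly the paper's proof. The paper writes the integral of an elementary integrand as $\caract_{F}\,M((s_0\vee s,t_0\wedge t],A)(S^*g)$, declares it $\mathcal{F}^M_{s,t}$-measurable, and passes to the limit with the It\^o isometry, tacitly using the completion you make explicit. The step you single out as the main obstacle is precisely the one the paper asserts without justification, and your suspicion is correct.

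Counterexample: take the setting of Example \ref{examCMVMCylindriLevyProcesses} with $H=G=\R$, $U=\{a\}$, and $Z_t(h)=hW_t$ for a standard Brownian motion $W$ with its natural filtration. Fix $0<s<t\le T$ and let $\Phi(\omega,r,u)=\caract_{(s,t]}(r)\caract_{F}(\omega)$ with $F=\{W_s>0\}\in\calF_s$.
\begin{itemize}
\item $\Phi\in\mathcal{S}(M,T)$ and $\int_s^t\!\int_U\Phi\,dM=\caract_F\,(W_t-W_s)$.
\item $\mathcal{F}^M_{s,t}=\sigma(W_t-W_u: s\le u\le t)$ is independent of $\calF_s$.
\item Since $W_t\neq W_s$ a.s., the event $\{\caract_F(W_t-W_s)\neq 0\}$ equals $F$ up to a null set.
\item Measurability with respect to (even the completion of) $\mathcal{F}^M_{s,t}$ would therefore make $F$ independent of itself, contradicting $\Prob(F)=1/2$.
\end{itemize}
So neither your proposal nor the paper's argument can prove the lemma as stated; the defect is in the statement.

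\textbf{What is true, and how to repair your sketch.} Lemma \ref{lemmaExistenUniqueSolutions} and the proof of Theorem \ref{theoremMarkov} only use a filtration version, applied inductively along the Picard iterates. If $\caract_{(s,t]}\Phi$ is predictable with respect to $(\mathcal{F}^M_{s,r})_{r\in[s,t]}$, then the integral is $\mathcal{F}^M_{s,t}$-measurable. When a random initial datum is present, predictability with respect to $(\mathcal{F}^{M,\xi}_r)$ gives $\mathcal{F}^{M,\xi}_t$-measurability. Your restricted-density idea is the right route to this, with two corrections.

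First, the sets $F_i$ must lie in $\mathcal{F}^M_{s,s_i}$, i.e.\ be generated by increments of $M$ over subintervals of $(s,s_i]$. These are in $\mathcal{F}^M_{s,t}$, since $M((u,s_i],A)=M((u,t],A)-M((s_i,t],A)$. The variables $M((u,t],A)(g)$ with $u\le s_i<t$ that you propose are not $\calF_{s_i}$-measurable. Your approximants would then not be predictable and would not belong to $\mathcal{S}(M,T)$.

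Second, the density cannot simply be postulated. It follows from a monotone class argument analogous to the one giving density of $\mathcal{S}(M,T)$ in $\Lambda^2(M,T)$. That argument must be run on the smaller predictable $\sigma$-algebra generated by the sets $(a,b]\times F$ with $F\in\mathcal{F}^M_{s,a}$.

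Your fallback with $\mathcal{F}^M_{s,t}\vee\sigma(\Phi)$ changes the statement. It would also need its own construction of $\sigma(\Phi)$-measurable predictable approximants. It becomes unnecessary once the filtration version is used.
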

\begin{proof}
By Proposition 6.13 in \cite{CCFM:SPDE} we have
$$ \int_{s}^{t}\!\! \int_{U} \, \Phi(r,u) \, M(dr,du) = \int_{0}^{t}\!\! \int_{U} \mathbbm{1}_{(s,t]} \, \Phi(r,u) \, M(dr,du),$$
so we must show that the stochastic integral of $\mathbbm{1}_{(s,t]} \, \Phi$ is $\mathcal{F}^{M}_{s,t}$-measurable. 

Assume first that $\Phi$ is of the elementary form $\Phi(\omega,s,u)= \mathbbm{1}_{(s_{0},t_{0}]}(s) \mathbbm{1}_{F}(\omega) \mathbbm{1}_{A}(u) S$, 
where $0 \leq s_{0} < t_{0} \leq T$, $F \in \mathcal{F}_{s_{0}}$, $A \in \mathcal{A}$ and $S\in \mathcal{HS}(H,G)$. By the definition of the stochastic integral, for every $g \in G$ we have
\begin{flalign*}
\left(  \int_{0}^{t}\!\! \int_{U} \mathbbm{1}_{(s,t]} \, \Phi(r,u) \, M(dr,du), g \right)_{G} 
= 
\begin{cases}
 0 & \mbox{ if } t_{0} < s \mbox{ or } s_{0}>t, \\
 \caract_{F} M((s_{0} \vee s, t_{0} \wedge t], A ) (S^{*} g) & \mbox{ if } t_{0} \geq s \mbox{ and } s_{0} \leq t,
\end{cases}
\end{flalign*}
which is $\mathcal{F}^{M}_{s,t}$-measurable. Since $G$ is a separable Hilbert space, we conclude that the stochastic integral $\displaystyle \int_{s}^{t}\!\! \int_{U} \, \Phi(r,u) \, M(dr,du)$ is $\mathcal{F}^{M}_{s,t}$-measurable. By linearity, this measurability extends to every simple integrand. For general $\Phi \in \Lambda^{2}(M,T)$, we know that $\Phi$ is the limit in $\Lambda^{2}(M,T)$ of simple integrands $\Phi_{n}$, and by the It\^{o} isometry
$$ 
\int_{0}^{t}\!\! \int_{U} \mathbbm{1}_{(s,t]} \, \Phi(r,u) \, M(dr,du) = L^2\text{-}\lim \int_{0}^{t}\!\! \int_{U} \mathbbm{1}_{(s,t]} \Phi_{n}(r,u)  \, M(dr,du).
$$
Being each integral on the right $\mathcal{F}_{s,t}^M$-measurable, the result follows.
\end{proof}

\subsection{Solutions to SPDEs}\label{sectSolutionsSPDEs}

Consider the following class of stochastic evolution equations:
\begin{equation}\label{EqSPDE}
  dX_t = \left[ AX_t + B(t,X_t)\right]dt + \int_U F(t,u,X_t) M(dt,du),
\end{equation}
where we will assume the following: 
\begin{enumerate}
    \item $A$ is the infinitesimal  generator for a $G$-valued $C_{0}$-semigroup $(S(t): t \geq 0)$. 
    \item $B: \R_+ \times G \to G$ is $\calB (\R_{+}) \otimes \calB(G)/\calB(G)$-measurable. 
    \item $F$ is a family of operators  $(F(t, u,g): t\geq 0, u \in U, g \in G)$ such that  
    \begin{enumerate}
        \item For all $ t\geq 0, u \in U, g \in G$, $F(t, u,g) \in \mathcal{HS}(H, G)$.
        \item For every $h \in H$, the mapping $(\omega, t,u,g) \mapsto F(t,u,g)\circ Q_M^{1/2}(\omega, t,u)(h)$ is $\calP_{T} \otimes   \calB(U)  \otimes \calB(G)/\calB(G)$-measurable. 
    \end{enumerate}
\end{enumerate}

We will be interested in weak and mild solutions for \eqref{EqSPDE}. Recall that a $G$-valued predictable process $X=(X_{t}: t \geq 0)$ is a \emph{weak solution} for equation \eqref{EqSPDE} if for all $g\in \mbox{Dom}(A^*)$  and $t \geq 0$, $\Prob$-a.e.
\begin{multline}\label{eqDefiWeakSolution}
    \left(X_t,g\right)_G  = \left(X_0,g\right)_G + \int_0^t \left[ \left(X_s,A^* g\right)_G  
    + \left(B(s,X_s),g\right)_G \right] ds \\
    + \int_0^t\! \! \int_U \left(F(s,u,X_s),g\right)_G M(ds,du),
\end{multline}
where the first integral on the right-hand side is defined $\Prob$-a.e. as a Lebesgue integral, and the second integral is the (real-valued) stochastic integral of the family $\{ \left(F(s,u,X_{s}(\omega),g\right)_G:  \omega \in \Omega, s \geq 0, u \in U\}$.

On the other hand, a $G$-valued predictable process $X=(X_{t}: t \geq 0)$ is a  \emph{mild solution} for equation \eqref{EqSPDE}  if for all $t  \geq 0$, $\Prob$-a.e.
\begin{equation}\label{EqStochIntDiffEq}
  X_t = S(t)X_0 + \int_0^t S(t-s)B(s,X_s)ds + \int_0^t\! \! \int_U S(t-s) F(s,u,X_s)M(ds,du).  
\end{equation}
Here the first integral is defined $\Prob$-a.e. as a Bochner integral, and the second integral is the stochastic integral of the family $\{ \mathbbm{1}_{[0,t]}(s) S(t-s)F(t,u,X_{s}(\omega)):  \omega \in \Omega, s \in [0,t], u \in U\}$.

\begin{assumption}
\label{assumpLipschitz} Assume that the coefficients $B$ and $F$ satisfy the following:
\begin{enumerate}
\item There is a constant $C_B > 0$ such that for all $t \in [0,T]$ and $g, h \in G$ we have
\begin{align}
\label{eqBoundBB}
    \| B(t, g)  \|_G & \leq C_B(1 + \|  g \|_G) \\
    \label{eqBoundBL}
    \| B(t, g) - B(t ,h)  \|_G & \leq C_B \|  g  - h \|_G
\end{align}

\item There is a constant $C_F > 0$ such that for all $t \in [0,T]$, $\omega \in \Omega$ and $g, h:[0,T] \to G$ c\`adl\`ag we have
\begin{align}
\label{eqBoundIntFQ}
    & \int_0^t\! \! \int_U \| F(s, u, g(s)) \circ Q^{1/2}_{M} \|_{\mathcal{HS}(H,G)}^2 \operQuadraVari{M}(ds, du)  \leq C_F\left(1 + \int_0^t \|  g(s) \|^2 \, ds \right) \\
    \begin{split}
\label{eqLipIntFQ}
        \int_0^t\! \! \int_U \| ( F(s, u, g(s)) - F(s, u, h(s)) )  \circ Q^{1/2}_{M} \|_{\mathcal{HS}(H,G)}^2 \operQuadraVari{M}(ds, du) \\ \leq C_F \int_0^t \|  g(s) -h(s)\|^2 \, ds
    \end{split}
\end{align}
\end{enumerate}
\end{assumption}

It is shown in Theorem 7.8 in \cite{CCFM:SPDE} that for a $\mathcal{F}_0$ measurable square integrable random variable $X_0$, the stochastic evolution equation \eqref{EqSPDE} has a unique weak solution $X=(X_{t}: t \geq 0)$ (which is its mild solution) with initial value $X_0$. Moreover, for every $T>0 $ we have $\displaystyle \Exp \int_{0}^{T} \norm{X_{t}}^{2}_{G} dt <\infty.$  Furthermore, $X=(X_{t}: t \geq 0)$  is the unique weak solution to \eqref{EqSPDE}.

\section{The Markov property for the solutions}\label{sectMarkovProperty}


In this section we prove that under additional assumptions on $M$ the unique solution to \eqref{EqSPDE} is  a Markov process with respect to the filtration $\left(\mathcal{F}^{M,\xi}_{t}: t \geq 0 \right)$, where $\mathcal{F}^{M,\xi}_{t}=\sigma \left( \mathcal{F}^{M}_{t_{0},t} \cup \sigma (\xi) \right)$. In the following lemma we list some properties of the solutions that will be of reference in our proofs:

\begin{lemma}
\label{lemmaExistenUniqueSolutions}
For every $t_{0} \geq 0$ and every $G$-valued $\mathcal{F}_{t_{0}}$-measurable integrable random variable $\xi$, the stochastic evolution equation \eqref{EqSPDE}
has a unique $\left(\mathcal{F}^{M,\xi}_{t}: t \geq t_{0} \right)$-adapted weak solution $(X_{t}: t \geq t_{0})$ with initial value $\xi$ satisfying $\displaystyle \Exp \int_{t_{0}}^{T} \norm{X_{t}}^{2}_{G} dt <\infty$ for every $t>t_{0} $ . This solution is given by:
\begin{equation}\label{eqMildSoluAtT0}
  X_t = S(t-t_{0})\xi + \int_{t_{0}}^t \, S(t-s)B(s,X_s) \, ds + \int_{t_{0}}^t \!\!\int_U \,  S(t-s) F(s,u,X_s) \, M(ds,du). \end{equation}
\end{lemma}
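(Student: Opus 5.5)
The plan is to reduce Lemma \ref{lemmaExistenUniqueSolutions} to Theorem 7.8 in \cite{CCFM:SPDE} by shifting time, and then to get the refined measurability from a Picard iteration together with Lemma \ref{lemmaMeasurabPropStochIntegral}. The noise is replaced by the shifted cylindrical orthogonal martingale-valued measure $\tilde M(t,A) = M((t_0,t_0+t],A)$. It lives on the filtration $\tilde{\mathcal{F}}_t=\mathcal{F}_{t_0+t}$ and has quadratic variation $\operQuadraVari{\tilde M}(dr,du)=\operQuadraVari{M}(t_0+dr,du)$ and $Q_{\tilde M}(r,u)=Q_M(t_0+r,u)$. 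The coefficients are shifted to $\tilde B(t,g)=B(t_0+t,g)$ and $\tilde F(t,u,g)=F(t_0+t,u,g)$. The first step is to check that Assumption \ref{assumpLipschitz} and the measurability hypotheses survive the shift; condition \eqref{eqBoundedrangequadraticvariation} is inherited on bounded intervals. Theorem 7.8 in \cite{CCFM:SPDE} then gives a unique weak solution $\tilde X$ that is also the mild solution. Setting $X_t=\tilde X_{t-t_0}$ and using Proposition 6.13 in \cite{CCFM:SPDE} to rewrite the shifted stochastic integrals as integrals over $(t_0,t]$ yields \eqref{eqMildSoluAtT0}, the bound $\Exp\int_{t_0}^T\norm{X_t}^2dt<\infty$, and uniqueness among all weak solutions. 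Uniqueness in the smaller filtration then follows, since such solutions are in particular solutions in the original filtration. Square integrability of $\xi$ is needed for this; I read ``integrable'' as square integrable, as in Theorem 7.8.

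The second and main step is adaptedness to $\mathcal{F}^{M,\xi}_t$, which does not follow from the abstract existence result. I would rerun the Picard scheme
\[
X^{0}_t=S(t-t_0)\xi,\qquad X^{n+1}_t = S(t-t_0)\xi + \int_{t_0}^t S(t-s)B(s,X^n_s)\,ds + \int_{t_0}^t\!\!\int_U S(t-s)F(s,u,X^n_s)\,M(ds,du),
\]
and prove by induction that each $X^n_t$ is $\mathcal{F}^{M,\xi}_t$-measurable. The first two terms are handled by measurability of $B$ and of the Bochner integral. For the stochastic term I would apply the argument of Lemma \ref{lemmaMeasurabPropStochIntegral} to the integrand $\mathbbm{1}_{(t_0,t]}(s)S(t-s)F(s,u,X^n_s)$. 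The obstacle is that this lemma only treats deterministic-in-$\omega$ simple approximations with $F_i\in\mathcal{F}_{s_i}$. To fix this, I would redo its proof with simple integrands whose sets $F_i$ lie in $\mathcal{F}^{M,\xi}_{s_i}$. Such integrands generate the predictable $\sigma$-algebra of the smaller filtration, so they approximate integrands predictable with respect to $(\mathcal{F}^{M,\xi}_t)$ in $\Lambda^2(M,T)$. Each elementary integral $\caract_{F}M((s_0\vee t_0,t_0'\wedge t],A)(S^*g)$ is then $\mathcal{F}^{M,\xi}_t$-measurable, and the $L^2$ limit preserves this, up to null sets and after completing the $\sigma$-algebras.

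There is a further subtlety: the integrand involves $Q_M$, which is predictable only for the original filtration. I expect this is harmless, because the integral is defined by $L^2$ limits of elementary integrals whose values depend only on $M$. Predictability of $X^n$ in the smaller filtration, rather than mere adaptedness, can be obtained by passing to a predictable version, for example by mean-square continuity or a standard modification argument.

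Finally, the Lipschitz bounds \eqref{eqBoundBL} and \eqref{eqLipIntFQ}, together with the It\^o isometry and the bound $\sup_{t\le T}\norm{S(t)}<\infty$, give the usual estimate
\[
\Exp\norm{X^{n+1}_t-X^n_t}^2\le C\int_{t_0}^t \Exp\norm{X^n_s-X^{n-1}_s}^2ds.
\]
Iterating this gives convergence in $L^2$ uniformly on $[t_0,T]$ to the mild solution, which is the unique solution from the first step. Limits of $\mathcal{F}^{M,\xi}_t$-measurable variables remain measurable modulo null sets, which gives the claimed adaptedness.
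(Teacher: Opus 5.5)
Your proposal follows the paper's proof: get the unique weak (equivalently mild) solution from the argument of Theorem 7.8 in \cite{CCFM:SPDE}, then show that each Picard iterate $X^n_t$ is $\mathcal{F}^{M,\xi}_t$-measurable and pass to the $L^2$ limit. The time shift $\tilde M(t,A)=M((t_0,t_0+t],A)$ is just a cleaner way of carrying out the paper's ``replicating the arguments''. Your refinement of Lemma \ref{lemmaMeasurabPropStochIntegral}, using simple integrands with $F_i\in\mathcal{F}^{M,\xi}_{s_i}$, is well taken and more careful than the paper. The paper cites that lemma directly, although its proof treats $\mathbbm{1}_F\, M((s_0\vee s,t_0\wedge t],A)(S^*g)$ with $F\in\mathcal{F}_{s_0}$ as $\mathcal{F}^M_{s,t}$-measurable, and this fails once the integrand depends on $\xi$.
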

\begin{proof}
Let $t_{0} \geq 0$. By replicating the arguments in the proof of Theorem 7.8 in \cite{CCFM:SPDE}, for every $G$-valued $\mathcal{F}_{t_{0}}$-measurable square integrable random variable $\xi$, the stochastic evolution equation \eqref{EqSPDE}
has a unique weak solution $(X_{t}: t \geq t_{0})$ with initial value $\xi$, given by \eqref{eqMildSoluAtT0}; it can be approximated on the space $L^{2}(\Omega \times [t_{0},T], \mathcal{P}_{T}, \Prob \otimes dt; G)$ by the sequence 
\begin{flalign*}
& X_{t}^{0}=S(t-t_{0})\xi, \\
& X^{n+1}_{t}=S(t-t_{0})\xi + \int_{t_{0}}^t \, S(t-r)B(r,X^{n}_r) \, dr + \int_{t_{0}}^t\!\! \int_U \,  S(t-r) F(r,u,X^{n}_r) \, M(dr,du). 
\end{flalign*}
Each $X^{n}_{t}$ is  $\mathcal{F}^{M,\xi}_{t}$-measurable by Lemma \ref{lemmaMeasurabPropStochIntegral},  
so $X_t$ is $\mathcal{F}^{M,\xi}_{t}$-measurable. 
\end{proof}

\begin{notation} When we need to emphasize the dependence of the solution on $t_{0}$ and $\xi$, we denote it by $X(t,t_{0};\xi)$.
\end{notation}

For further developments we will need the following result on the continuity of the solutions with respect to the initial data. 

\begin{lemma}\label{lemmaContiDepenInitialData}
Let $0 \leq t_{0} \leq T$. For any two $G$-valued $\mathcal{F}_{t_{0}}$-measurable square integrable random variables $\xi$ and $\eta$, there exists $C_{T}>0$ such that 
\begin{equation*}
 \Exp \norm{ X(t,t_{0};\xi) -X(t,t_{0};\eta) }^{2}_{G}   \leq 
 C_{T} \Exp \norm{\xi-\eta}^{2}_{G}. 
\end{equation*}
\end{lemma}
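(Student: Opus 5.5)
We will prove the estimate by a Gronwall argument on the mild formulation \eqref{eqMildSoluAtT0}. Write $X_t = X(t,t_0;\xi)$, $Y_t = X(t,t_0;\eta)$ and $\phi(t) = \Exp\norm{X_t - Y_t}^2_G$ for $t \in [t_0,T]$. Subtracting the two mild representations gives
\begin{multline*}
X_t - Y_t = S(t-t_0)(\xi-\eta) + \int_{t_0}^t S(t-s)\bigl[B(s,X_s)-B(s,Y_s)\bigr]\,ds \\
+ \int_{t_0}^t\!\!\int_U S(t-s)\bigl[F(s,u,X_s)-F(s,u,Y_s)\bigr]\,M(ds,du).
\end{multline*}
Using $(a+b+c)^2 \leq 3(a^2+b^2+c^2)$, we bound the expected square of each term separately. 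Throughout, $K_T \defeq \sup_{0\le r \le T}\norm{S(r)}_{\mathcal{L}(G)}<\infty$ is the uniform bound on the $C_0$-semigroup.

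For the first term, $\Exp\norm{S(t-t_0)(\xi-\eta)}^2 \leq K_T^2\,\Exp\norm{\xi-\eta}^2$. For the drift term, Cauchy--Schwarz in time together with the Lipschitz bound \eqref{eqBoundBL} gives
\[
\Exp\Bigl\|\int_{t_0}^t S(t-s)[B(s,X_s)-B(s,Y_s)]\,ds\Bigr\|^2 \leq T K_T^2 C_B^2 \int_{t_0}^t \phi(s)\,ds .
\]
For the stochastic convolution we use the Itô isometry \eqref{eqItoIsometrySimpleIntegrands}, extended to $\Lambda^2(M,T)$. Since $\norm{S(t-s)\Psi}_{\mathcal{HS}}\le K_T\norm{\Psi}_{\mathcal{HS}}$, we reduce to the integral appearing in \eqref{eqLipIntFQ}. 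Applying \eqref{eqLipIntFQ} $\omega$-wise and then taking expectations gives the bound $K_T^2 C_F\int_{t_0}^t \phi(s)\,ds$. Two remarks are needed here. The integrand is indexed from $t_0$, which is harmless since \eqref{eqLipIntFQ} is a bound on an integral of a nonnegative quantity, so restricting to $[t_0,t]$ only decreases the left side (extend paths by zero before $t_0$). We also need the integrand $\mathbbm{1}_{[t_0,t]}(s)S(t-s)[F(s,u,X_s)-F(s,u,Y_s)]$ to lie in $\Lambda^2(M,T)$. This follows from the predictability in the measurability hypotheses on $F$ and from \eqref{eqBoundIntFQ} combined with $\Exp\int\norm{X_s}^2ds<\infty$ from Lemma \ref{lemmaExistenUniqueSolutions}.

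Combining the three bounds gives
\[
\phi(t) \leq 3K_T^2\,\Exp\norm{\xi-\eta}^2 + 3K_T^2(TC_B^2 + C_F)\int_{t_0}^t \phi(s)\,ds,
\]
and Gronwall yields the claim with $C_T = 3K_T^2 \exp\bigl(3K_T^2(TC_B^2+C_F)T\bigr)$, a constant independent of $t_0$, $\xi$ and $\eta$.

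\textbf{Main obstacle.} The delicate point is the applicability of \eqref{eqLipIntFQ}, which is stated for deterministic càdlàg paths $g,h$, whereas $X$ and $Y$ are only predictable with $\Exp\int\norm{X_s}^2ds<\infty$. There is also the prerequisite that $\phi$ is finite and measurable, needed before Gronwall can be invoked. I would handle this in one of two ways. The first is to apply the bound pathwise on the Picard iterates $X^n$, $Y^n$ of Lemma \ref{lemmaExistenUniqueSolutions}: derive the analogous inequality $\phi_{n+1}(t)\le a + b\int_{t_0}^t\phi_n$, iterate to get a uniform exponential bound, and pass to the limit in $L^2(\Omega\times[t_0,T])$. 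The second is to argue as in Theorem 7.8 of \cite{CCFM:SPDE}, where the same assumption is used for the solution itself; the resulting estimate holds for $dt$-a.e. $t$ and then extends to every $t$ from the mild representation. Finiteness of $\phi$ on $[t_0,T]$ follows from the same three bounds applied to each solution separately, using \eqref{eqBoundBB} and \eqref{eqBoundIntFQ}.
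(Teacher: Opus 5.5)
Your proposal is correct and is essentially the paper's argument: the paper also works from the two mild representations \eqref{eqMildSoluAtT0}. Following the estimates in the proof of Theorem 7.8 of \cite{CCFM:SPDE}, which are your three bounds, it obtains $\Exp\norm{X(t,t_0;\xi)-X(t,t_0;\eta)}_G^2 \leq C_{1,T}\Exp\norm{\xi-\eta}_G^2 + C_{2,T}\int_{t_0}^t \Exp\norm{X(s,t_0;\xi)-X(s,t_0;\eta)}_G^2\,ds$ and concludes by Gronwall, only without explicit constants. Your second way of handling the c\`adl\`ag restriction in \eqref{eqLipIntFQ} is exactly what the paper implicitly does; the first, via Picard iterates, would not remove the issue by itself, since the iterates also contain stochastic convolutions that are not known to be c\`adl\`ag for a general $C_0$-semigroup.
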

\begin{proof}
Following the estimates in the proof of Theorem 7.8 in \cite{CCFM:SPDE}, one can show the existence of some constants $C_{1,T}, C_{2,T}>0$ such that, for every $t_{0} \leq t \leq T$,
\begin{multline}
 \Exp \norm{ X(t,t_{0};\xi)-X(t,t_{0};\eta)}^{2}_{G}  \\
 \leq C_{1,T} \Exp \norm{\xi-\eta}^{2}_{G}+ C_{2,T} \Exp \int_{t_{0}}^{t} \norm{ X(s,t_{0};\xi)-X(s,t_{0};\eta)}^{2}_{G} ds.  
\end{multline}
The result is obtained as a consequence of Gronwall's lemma. 
\end{proof}

We will need the following mean square continuity  of the solution. 

\begin{lemma}\label{lemmaMeanSquareContinuity}
For every $t_{0} \geq 0$ and every $G$-valued $\mathcal{F}_{t_{0}}$-measurable integrable random variable $\xi$, we have for $t \geq t_{0}$, 
$$ \lim_{s \rightarrow t} \Exp \norm{ X(s,t_{0};g)-X(t,t_{0};g)}^{2}_{G}=0$$
\end{lemma}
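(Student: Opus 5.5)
We will prove mean-square continuity directly from the mild formulation \eqref{eqMildSoluAtT0}. We take $\xi$ square integrable, as in Lemma \ref{lemmaExistenUniqueSolutions} and Lemma \ref{lemmaContiDepenInitialData}, and write $X_t = X(t,t_0;\xi)$.

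Fix $t_0\le s<t\le T$; the case $s>t$ is symmetric with the roles of $s,t$ exchanged. We decompose
\begin{align*}
X_t - X_s &= \bigl(S(t-t_0)-S(s-t_0)\bigr)\xi + \int_s^t S(t-r)B(r,X_r)\,dr \\
&\quad + \int_{t_0}^s \bigl(S(t-r)-S(s-r)\bigr)B(r,X_r)\,dr + \int_s^t\!\!\int_U S(t-r)F(r,u,X_r)\,M(dr,du) \\
&\quad + \int_{t_0}^s\!\!\int_U \bigl(S(t-r)-S(s-r)\bigr)F(r,u,X_r)\,M(dr,du).
\end{align*}
We then bound $\Exp\|X_t-X_s\|^2$ by five times the sum of the second moments of these five terms. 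Throughout we use $\|S(r)\|\le Me^{\omega r}$ on $[0,T]$ and $\Exp\int_{t_0}^T\|X_r\|^2dr<\infty$ from Lemma \ref{lemmaExistenUniqueSolutions}.

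\textbf{Deterministic terms.}
\begin{itemize}
\item The first term tends to zero by strong continuity of $S$ and dominated convergence, with dominating function $4M^2e^{2\omega T}\|\xi\|^2$.
\item For the second term, Cauchy--Schwarz and \eqref{eqBoundBB} give the bound $(t-s)\,C\int_{t_0}^T(1+\Exp\|X_r\|^2)dr\to0$.
\item For the third term, we write $S(t-r)-S(s-r)=(S(t-s)-I)S(s-r)$. The integrand goes to zero pointwise in $(\omega,r)$ by strong continuity. It is dominated by $C(1+\|X_r\|)$, which is square integrable on $\Omega\times[t_0,T]$. Cauchy--Schwarz in $r$ followed by dominated convergence then finishes this term. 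For $s\uparrow t$ one uses instead $S(t-r)-S(s-r)=(S(t-s)-I)S(s-r)$ with $s$ moving. It is cleaner to treat the limit $s\to t$ by symmetric decompositions or by viewing both as $h=|t-s|\to0$ with $S(h)-I$ applied to a quantity depending on the smaller time. This still works because $\sup_{r}\|S(r)\|$ is bounded, so dominated convergence applies to $\|(S(h)-I)S(s-r)B(r,X_r)\|$ after extending integrands by zero.
\end{itemize}

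\textbf{Stochastic terms.} We use the Itô isometry \eqref{eqItoIsometrySimpleIntegrands} together with the observation $\|S F Q^{1/2}\|_{HS}\le\|S\|\,\|FQ^{1/2}\|_{HS}$.
\begin{itemize}
\item For the fourth term, the isometry gives at most
$$M^2e^{2\omega T}\,\Exp\int_s^t\!\!\int_U\|F(r,u,X_r)Q_M^{1/2}\|^2_{HS}\,\operQuadraVari{M}(dr,du).$$
This tends to zero by dominated convergence in $(\omega)$ and continuity of the finite measure $\mathbbm{1}_{[t_0,T]}\operQuadraVari{M}$-integral over shrinking intervals. Finiteness of the total integral follows from \eqref{eqBoundIntFQ}, taking expectations and using $\Exp\int\|X_r\|^2dr<\infty$; \eqref{eqBoundIntFQ} is stated for càdlàg paths, so we apply it to a suitable version or approximate, as done in \cite{CCFM:SPDE}.
\item For the fifth term, the isometry gives
$$\Exp\int_{t_0}^s\!\!\int_U\|(S(h)-I)S(s-r)F(r,u,X_r)Q_M^{1/2}\|^2_{HS}\,d\operQuadraVari{M}.$$
The integrand tends to zero pointwise as $h\to0$. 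This holds because, for a fixed Hilbert--Schmidt operator $K$, strong convergence $S(h)-I\to0$ implies $\|(S(h)-I)K\|_{HS}\to0$, by dominated convergence over an orthonormal basis. The integrand is dominated by $4M^2e^{2\omega T}\|F Q^{1/2}\|_{HS}^2$, which is integrable as above.
\end{itemize}

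\textbf{Main obstacle.} The main obstacle is the fifth term when $s$ moves. There the integrand's dependence on $s$ enters both through $S(s-r)$ and through the upper limit, so pointwise convergence must be arranged carefully. We handle it by writing everything with the fixed later endpoint, i.e. comparing via $(S(h)-I)S(\cdot)$ with $S$ applied to a fixed HS operator. If needed, we approximate $FQ^{1/2}$ in $L^2$ by finite-rank predictable integrands, on which uniform strong continuity of $S$ on compacts gives the convergence, and control the error uniformly by the isometry.
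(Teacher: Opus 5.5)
Your treatment of right limits and of your first, second, third and fifth terms is fine. The gap is the left limit of your fourth term. As $s\uparrow t$ the half-open intervals $(s,t]$ decrease to $\{t\}$, not to $\emptyset$. So the It\^o isometry and dominated convergence only give
\begin{multline*}
\Exp\norm{\int_s^t\!\!\int_U S(t-r)F(r,u,X_r)\,M(dr,du)}^2_G \\
\longrightarrow\Exp\int_{\{t\}\times U}\norm{F(t,u,X_t)\,Q_M^{1/2}}^2_{\mathcal{HS}(H,G)}\,\operQuadraVari{M}(dr,du).
\end{multline*}
Nothing in your argument makes the right-hand side vanish. $\operQuadraVari{M}$ may charge deterministic times, since predictable brackets jump at predictable times (e.g.\ for a martingale that jumps at a fixed time), and \eqref{eqBoundIntFQ} only controls integrals starting at $0$. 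The limit equals $\Exp\norm{\Delta Z_t}^2_G$, the jump at time $t$ of the martingale $Z_\sigma=\int_{t_0}^{\sigma}\int_U S(t-r)F(r,u,X_r)\,M(dr,du)$, $\sigma\leq t$.

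You can close this within the paper's definition of solution. Since $X$ is predictable, $X_t$ is $\mathcal{F}_{t-}$-measurable, and so are $S(t-t_0)\xi$, the Bochner term and $Z_{t-}$. The mild equation at time $t$ then makes $\Delta Z_t$ $\mathcal{F}_{t-}$-measurable, and since $\Exp[\Delta Z_t\,|\,\mathcal{F}_{t-}]=0$, it vanishes a.s. Alternatively, assume $\operQuadraVari{M}(\{t\}\times U)=0$ a.s.; this is automatic in Section \ref{sectFellerSolutions}, where $\operQuadraVari{M}=\mu\otimes\mbox{Leb}$ and where the lemma is actually used. The paper passes this point differently: it bounds the noise increment by $C_T\int_s^t\Exp\norm{X_r}^2_G\,dr$ via the estimates of Theorem 7.8 in \cite{CCFM:SPDE}. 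That amounts to a version of \eqref{eqBoundIntFQ} localized to $[s,t]$ and dominated by Lebesgue measure in time.

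Apart from this, your route differs from the paper's. The paper uses the flow identity $X(t,t_0;\xi)=X(t,s;X(s,t_0;\xi))$ to write
\begin{multline*}
X_t-X_s=(S(t-s)-I)X_s+\int_s^t S(t-r)B(r,X_r)\,dr \\
+\int_s^t\!\!\int_U S(t-r)F(r,u,X_r)\,M(dr,du),
\end{multline*}
so drift and noise enter only through $[s,t]$. This makes right limits immediate, because the state at the earlier time is fixed and dominated convergence applies. For left limits, however, $X_s$ moves with $s$, and the paper does not explain why $\Exp\norm{(S(t-s)-I)X_s}^2_G\to0$. Expanding that term through the mild formula yields exactly your first, third and fifth terms, so your fixed-origin decomposition supplies this missing detail.

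Finally, your `main obstacle' is not one. For a fixed Hilbert--Schmidt operator $K$, the map $\tau\mapsto S(\tau)K$ is continuous into $\mathcal{HS}(H,G)$, by strong continuity plus dominated convergence over an orthonormal basis. Hence the fifth integrand tends to $0$ pointwise in $(\omega,r,u)$ (for $s<t$ note that $\mathbbm{1}_{(t_0,s]}(t)=0$). No factorization through $S(t-s)-I$ and no finite-rank approximation is needed.
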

\begin{proof}
    Let $0 \leq s \leq t$ and $g \in G$. We start with the identity 
\begin{eqnarray*}
X(t,0;g)-X(s,0;g) 
& = & X(t,s;X(s,0;g))-X(s,0;g) \\
& = & (S(t-s)-I) X(t,s;g) + \int_{s}^{t} S(s-r) B(X(r,0;g)) dr \\
& {} & + \int_{s}^{t} \int_{U} S(s-r)F(u,X(r,0;g)) M(dr,du).  
\end{eqnarray*}
Then, from the estimates in the proof of Theorem 7.8 in \cite{CCFM:SPDE} we arrive at the inequality
\begin{flalign}
& \Exp \left[ \norm{X(t,0;g)-X(s,0;g)}_{G}^{2} \right] \nonumber \\ 
& \leq  3 \Exp \left[ \norm{ (S(t-s)-I) X(t,s;g) }_{G}^{2} \right]  + 3 C_{T} \int_{s}^{t} \Exp \left[ \norm{X(r,0;g)}_{G}^{2} \right] dr, \label{eqContiSolutionInTimeVariable}
\end{flalign}
which shows that the limit from the left is zero. An analogous argument can be used for the limit from the right.
\end{proof}

\begin{definition}\label{defiIndepenIncrements}
We say that $M$ has \emph{independent increments} if for every $0 \leq s < t $, $h \in H$ and $A \in \mathcal{A}$, $M((s,t],A)(h)$ is independent of $\mathcal{F}_{s}$. 
\end{definition}

Denote by $ B_{b}(G)$ the set of bounded Borel-measurable real-valued functions on $G$. 

\begin{definition}
For $g \in G$, $\varphi \in B_{b}(G)$, $0 \leq s \leq t$, define
$$ \left( P_{s,t} \varphi \right)(g) = \Exp \left[ \varphi (X(t,s;g)) \right],$$
and for all $0 \leq s \leq t$, $g \in G$,  and $\Gamma \in \mathcal{B}(G)$,  
$$ P(s,g,t,\Gamma)\defeq (P_{s,t} \mathbbm{1}_{\Gamma})(g) = \Prob \left( X(t,s,g) \in \Gamma \right). $$
\end{definition}

The following is the main result of this section. 

\begin{theorem}
\label{theoremMarkov}
Assume $M$ has independent increments. For $u \geq 0$, let $\xi$ be a   $G$-valued $\mathcal{F}_{u}$-measurable square integrable random variable. 
Then, for every $\varphi \in B_{b}(G)$, $u \leq s \leq t$, we have
\begin{equation}\label{eqMarkovProperty}
\Exp \left[ \varphi(X(t,u;\xi)) \, \vline \, \mathcal{F}^{M,\xi}_{s} \right] =  \left( P_{s,t} \varphi \right)(X(s,u;\xi)).    
\end{equation}
In particular, $(X(t,u; \xi): t \geq u)$ is a Markov process.    
\end{theorem}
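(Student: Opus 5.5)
The plan is the classical Da Prato--Zabczyk scheme: a flow property, a "freezing" lemma for independent $\sigma$-algebras, and an approximation of the initial datum by simple random variables.

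\textbf{Step 1: flow property.} By uniqueness in Lemma \ref{lemmaExistenUniqueSolutions}, I will show that for $u \le s \le t$,
\[
X(t,u;\xi)=X(t,s;X(s,u;\xi)) \quad \Prob\text{-a.e.}
\]
Split the integrals in \eqref{eqMildSoluAtT0} at $s$ and use $S(t-r)=S(t-s)S(s-r)$. Then $X(\cdot,u;\xi)$ restricted to $[s,T]$ solves the mild equation started at time $s$ from $\eta:=X(s,u;\xi)$. This $\eta$ is $\mathcal{F}_s$-measurable and square integrable, so uniqueness gives the identity.

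\textbf{Step 2: measurable dependence on deterministic data.} For deterministic $g\in G$, I will show that $X(t,s;g)$ is $\mathcal{F}^{M}_{s,t}$-measurable. This follows from the Picard iterates in the proof of Lemma \ref{lemmaExistenUniqueSolutions} together with Lemma \ref{lemmaMeasurabPropStochIntegral}, since with deterministic data no $\sigma(\xi)$ enters. I also need a jointly measurable version $(g,\omega)\mapsto X(t,s;g)(\omega)$. Lemma \ref{lemmaContiDepenInitialData} gives $L^2$-continuity in $g$, so I will take the limit of versions defined on a countable dense set and its simple approximations.

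Next, independent increments should give that $\mathcal{F}^{M}_{s,t}$ is independent of $\mathcal{F}^{M,\xi}_s$. The generators $M((r,t],A)(h)$ with $r\ge s$ are independent of $\mathcal{F}_r\supseteq\mathcal{F}_s$. To get joint independence of the finite families, I will order the times $s\le r_1<\dots<r_k\le t$ and write each generator as a sum of increments over $(r_i,r_{i+1}]$. Each increment is independent of the past, and conditioning successively gives the factorization of characteristic functions.

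This is the main obstacle. Definition \ref{defiIndepenIncrements} only gives independence of each single real variable $M((s,t],A)(h)$ from $\mathcal{F}_s$, not of finite vectors. The first thing I would try is to read the definition as independence of the $\sigma$-algebra generated by all such increments beyond $s$, i.e.\ the "weakly independent increments" mentioned in the introduction, and to note this explicitly. Also, $\mathcal{F}^{M,\xi}_s\subseteq\mathcal{F}_s$ holds because $\xi$ is $\mathcal{F}_u$-measurable and the stochastic integrals are adapted.

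\textbf{Step 3: conclusion for simple initial data.} For $\eta$ simple, $\eta=\sum_k g_k\caract_{\Gamma_k}$ with $\Gamma_k\in\mathcal{F}^{M,\xi}_s$, uniqueness gives
\[
X(t,s;\eta)=\sum_k \caract_{\Gamma_k}X(t,s;g_k).
\]
Independence then yields $\Exp[\varphi(X(t,s;\eta))\mid\mathcal{F}^{M,\xi}_s]=(P_{s,t}\varphi)(\eta)$. For general $\eta=X(s,u;\xi)$, which is $\mathcal{F}^{M,\xi}_s$-measurable by Lemma \ref{lemmaExistenUniqueSolutions}, I approximate by simple $\eta_n\to\eta$ in $L^2$.

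First take $\varphi$ bounded and Lipschitz. Lemma \ref{lemmaContiDepenInitialData} gives convergence of the left-hand sides in $L^1$. It also shows that $P_{s,t}\varphi$ is continuous, so the right-hand sides converge as well. A monotone class argument then extends the identity to all $\varphi\in B_b(G)$. Combined with Step 1, this gives \eqref{eqMarkovProperty}.

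Finally, the Markov property follows from the tower property, since $\sigma(X(s,u;\xi))\subseteq\mathcal{F}^{M,\xi}_s$.
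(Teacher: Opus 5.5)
Your proposal is correct and follows essentially the paper's route. The shared steps are: the flow property via uniqueness; $\mathcal{F}^{M}_{s,t}$-measurability, and hence independence, of $X(t,s;g)$ for deterministic $g$; the identity for simple $\eta$; and passage to general $\eta$ through Lemma~\ref{lemmaContiDepenInitialData} plus a monotone class argument. You use bounded Lipschitz $\varphi$ with $L^1$ convergence where the paper uses $C_b(G)$ with an a.s.\ convergent subsequence, and the jointly measurable version you construct is not actually needed.

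Your remark that Definition~\ref{defiIndepenIncrements} only gives independence of each single variable $M((s,t],A)(h)$ from $\mathcal{F}_s$ is correct and substantive. The paper's proof silently uses independence of the whole $\sigma$-algebra $\mathcal{F}^{M}_{s,t}$ from $\mathcal{F}_s$. Linearity in $h$ and successive conditioning in time recover this for a fixed set $A$, but not jointly across different sets $A\in\mathcal{A}$, and I believe examples can be built where the identity fails without it. So stating the strengthened reading as an explicit hypothesis is exactly what is required.
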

\begin{proof} 
By uniqueness of solutions (Lemma \ref{lemmaExistenUniqueSolutions}), for $u \leq s \leq t$, we have $\Prob$-a.e. 
$$ X(t,u; \xi)=X(t,s;X(s,u; \xi)).$$
Hence, to show \eqref{eqMarkovProperty} it is equivalent to prove that 
$$ \Exp \left[ \varphi(X(t,s;X(s,u; \xi))) \, \vline \, \mathcal{F}^{M,\xi}_{s} \right] =  \left( P_{s,t} \varphi \right)(X(s,u;\xi)).    $$
Thus, it suffices to show the more general property:
\begin{equation}\label{eqStrongerMarkovPropertyInProof}
\Exp \left[ \varphi(X(t,s;\eta)) \, \vline \, \mathcal{F}^{M,\xi}_{s} \right] =  \left( P_{s,t} \varphi \right)(\eta),    
\end{equation}
for a $G$-valued $\sigma(X(s,u;\xi))$-measurable square integrable random variable $\eta$. Furthermore, by the functional form of the monotone class theorem, it is enough to show that  \eqref{eqStrongerMarkovPropertyInProof} holds for $\varphi \in C_{b}(G)$. 

Now, observe that by Lemma \ref{lemmaMeasurabPropStochIntegral}, for every $g \in G$ the solution $X(t,s;g)$ is $\mathcal{F}^{M}_{s,t}$-measurable. Hence, by the independent increments property of $M$ we have that $X(t,s;g)$ is independent of $\mathcal{F}^{M,\xi}_{s}$. Therefore, 
$$ \Exp \left[ \varphi(X(t,s;g)) \, \vline \, \mathcal{F}^{M,\xi}_{s} \right] =  \left( P_{s,t} \varphi \right)(g).  $$
Now, consider a simple function 
\begin{equation}\label{eqSimpleFunctionEtaProofMarkov}
\eta= \sum_{j=1}^{n} g_{j} \mathbbm{1}_{\Gamma_{j}}(X(s,u;\xi)),     
\end{equation}
where $\{ \Gamma_{j}: j=1,\dots, n \}$ is a measurable partition of $G$, and $g_{1}, \dots, g_{n} \in G$. By uniqueness of solutions 
we can show that
$$ X(t,s;\eta)=\sum_{j=1}^{n} X(t,s;g_{j}) \mathbbm{1}_{\Gamma_{j}}(X(s,u;\xi)).$$
Then, by the independence of $X(t,s;g_{j})$ with $\mathcal{F}^{M,\xi}_{s}$, we have 
\begin{eqnarray*}
\Exp \left[ \varphi(X(t,s;\eta)) \, \vline \, \mathcal{F}^{M,\xi}_{s} \right] 
& = & \sum_{j=1}^{n} \Exp \left[ \varphi(X(t,u;g_{j})) \mathbbm{1}_{\Gamma_{j}}(X(s,u;\xi)) \, \vline \, \mathcal{F}^{M,\xi}_{s} \right] \\
& = & \sum_{j=1}^{n} \left( P_{s,t} \varphi \right)(g_{j}) \mathbbm{1}_{\Gamma_{j}}(X(s,u;\xi))  =  \left( P_{s,t} \varphi \right)(\eta).    
\end{eqnarray*}
Assume that $\eta$ satisfies $\Exp \left[ \norm{\eta}^{2}_{G} \right] < \infty$. Then there exists a sequence of simple functions $(\eta_{n}: n \in \N)$ in the form of \eqref{eqSimpleFunctionEtaProofMarkov} such that $\Exp \left[ \norm{\eta_{n}}^{2}_{G} \right] < \infty$ and $\Exp \left[ \norm{\eta_{n}-\eta}^{2}_{G} \right] \rightarrow 0$. By Lemma \ref{lemmaContiDepenInitialData} we have 
$$ \Exp \left[ \norm{X(t,s;\eta_{n})-X(t,s;\eta)}^{2}_{G} \right] \rightarrow 0. $$
We can select a subsequence $(n_{k}: k \in \N)$ such that $X(t,s;\eta_{n_{k}})\rightarrow X(t,s;\eta)$ $\Prob$-a.e. Since $\varphi \in C_{b}(G)$, we have
\begin{eqnarray*}
\Exp \left[ \varphi(X(t,s;\eta)) \, \vline \, \mathcal{F}^{M,\xi}_{s} \right] 
& = & \lim_{k \rightarrow \infty} \Exp \left[ \varphi(X(t,s;\eta_{n_{k}})) \, \vline \, \mathcal{F}^{M,\xi}_{s} \right]  \\
& = &  \lim_{k \rightarrow \infty} \left( P_{s,t} \varphi \right)(\eta_{n_{k}}) =  \left( P_{s,t} \varphi \right)(\eta).    
\end{eqnarray*}
This proves the desired identity \eqref{eqStrongerMarkovPropertyInProof}. 
\end{proof}

\begin{proposition}\label{propChapmanKolmogorov}
Assume $M$ has independent increments. For arbitrary $\varphi \in B_{b}(G)$, $\Gamma \in \mathcal{B}(G)$, $0 \leq s \leq u \leq t$, and $g \in G$,
$$ P_{s,u}(P_{u,t} \varphi)(g)= (P_{s,t} \varphi)(g),$$
and
$$P(s,g,t,\Gamma)=\int_{G} P(u,y,t,\Gamma) P(s,g,u,dy) $$
\end{proposition}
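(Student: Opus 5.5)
The plan is to derive the Chapman--Kolmogorov identity from the flow property of the solutions together with the Markov property \eqref{eqMarkovProperty} of Theorem \ref{theoremMarkov}, applied with deterministic initial condition.

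Fix $0 \leq s \leq u \leq t$ and $g \in G$. The constant random variable $\xi \equiv g$ is $\mathcal{F}_{s}$-measurable and square integrable, and $\sigma(\xi)$ is trivial. Hence $\mathcal{F}^{M,\xi}_{u}=\mathcal{F}^{M}_{s,u}$, taking $t_{0}=s$ in the definition of the filtration. As in the proof of Theorem \ref{theoremMarkov}, uniqueness in Lemma \ref{lemmaExistenUniqueSolutions} gives $X(t,s;g)=X(t,u;X(u,s;g))$ $\Prob$-a.e. I would first establish the identity for $\varphi \in B_{b}(G)$. Applying Theorem \ref{theoremMarkov} with starting time $s$, initial value $\xi=g$, and intermediate time $u$ gives
\[
\Exp\left[ \varphi(X(t,s;g)) \,\vline\, \mathcal{F}^{M,g}_{u}\right] = (P_{u,t}\varphi)(X(u,s;g)).
\]
Taking expectations on both sides, the left-hand side becomes $\Exp[\varphi(X(t,s;g))]=(P_{s,t}\varphi)(g)$. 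The right-hand side becomes $\Exp[(P_{u,t}\varphi)(X(u,s;g))]$, which equals $P_{s,u}(P_{u,t}\varphi)(g)$ by the definition of $P_{s,u}$ applied to the function $P_{u,t}\varphi$.

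There is one technical point to address: $P_{u,t}\varphi$ must belong to $B_{b}(G)$, so that $P_{s,u}$ can be applied to it and the composition $(P_{u,t}\varphi)(X(u,s;g))$ is measurable. Boundedness is immediate. For measurability of $y \mapsto \Exp[\varphi(X(t,u;y))]$, the cleanest route is a monotone class argument. For $\varphi \in C_{b}(G)$, Lemma \ref{lemmaContiDepenInitialData} gives $X(t,u;y_{n}) \to X(t,u;y)$ in $L^{2}$ whenever $y_{n}\to y$. Dominated convergence along subsequences then shows that $P_{u,t}\varphi$ is continuous, hence Borel. The class of $\varphi \in B_{b}(G)$ for which $P_{u,t}\varphi$ is Borel is a vector space closed under bounded monotone limits, by monotone convergence, and contains $C_{b}(G)$. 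Therefore it is all of $B_{b}(G)$. I expect this to be the main, though mild, obstacle. It is implicitly needed already in Theorem \ref{theoremMarkov}, so one could alternatively cite it as part of that theorem.

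The second identity is the special case $\varphi=\mathbbm{1}_{\Gamma}$. By definition, $P_{s,u}\psi(g)=\int_{G}\psi(y)\,P(s,g,u,dy)$ for bounded Borel $\psi$, since $P(s,g,u,\cdot)$ is the law of $X(u,s;g)$. Taking $\psi=P_{u,t}\mathbbm{1}_{\Gamma}=P(u,\cdot,t,\Gamma)$ then yields
\[
P(s,g,t,\Gamma)=\int_{G} P(u,y,t,\Gamma)\,P(s,g,u,dy).
\]
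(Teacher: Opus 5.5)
Your proposal is correct and follows the paper's route. The paper simply says the result is an immediate consequence of Theorem \ref{theoremMarkov}, and you make this explicit by taking expectations in \eqref{eqMarkovProperty} with the deterministic initial value $\xi\equiv g$ at time $s$; your monotone class argument for the Borel measurability of $P_{u,t}\varphi$ is a detail the paper leaves implicit.
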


\begin{proof}
It is an immediate consequence of Theorem \ref{theoremMarkov}.
\end{proof}

\begin{example}\label{examCylinPoissonIntegrals}
Let $(\widetilde N_k)_{k\ge1}$ be independent compensated Poisson random measures on $[0,\infty)\times U$, each with compensator $ds\,\nu(du)$. 
Let $a:[0,\infty)\times U \to \mathbb{R}$ be measurable and assume
$$
\int_0^T \!\! \int_U |a(s,u)|^2\,ds\,\nu(du) < \infty, 
\qquad \forall t>0.
$$
Let $(e_{k}: k \in \N)$ be an orthonormal basis in $H$. Define, for $h\in H$,
$$
M(t,A)(h)
:=
\sum_{k=1}^\infty \inner{h}{e_k} \int_0^t \int_A a(s,u)\,\widetilde N_k(ds,du).
$$
We will show that $M$ is a cylindrical orthogonal martingale-valued measure with weakly independent increments. To do this, set
$$
Y_k(t,A)
:=
\int_0^t \int_A a(s,u)\,\widetilde{N}_k(ds,du),
\qquad k\ge1.
$$
Observe that 
$$
\mathbb{E}|Y_k(t,A)|^2
=
\int_0^t \int_A |a(s,u)|^2\,ds\,\nu(du),
$$
hence each $(Y_{k}(t,A):t \geq 0)$ is a real-valued mean-zero square integrable martingale. Thus one can easily conclude that each of these is a real-valued cylindrical orthogonal martingale-valued measure. Moreover, since the $\widetilde{N}_k$ are independent, so the $Y_{k}$ are.  Since
\begin{equation}\label{eqOrthogonalCMVM}
 M(t,A)(h)=\sum_{k=1}^\infty \inner{h}{e_k} Y_k(t,A),    
\end{equation}
then one can easily conclude that 
$$ 
\mathbb{E}|M(t,A)(h)|^2
=
\sum_{k=1}^\infty |\inner{h}{e_k} |^2 \mathbb{E}|Y_k(t,A)|^2
=
\|h\|^2 \int_0^t \int_A |a(s,u)|^2\,ds\,\nu(du).
$$
Thus $M(t,A):H\to L^2(\Omega, \mathcal{F},\Prob)$ is well-defined, linear and continuous. Then it is not difficult to conclude that $M$ is a cylindrical orthogonal martingale-valued measure. We can check that $M$ has weakly independent increments. In effect, for $0\le s<t$, $A \in \mathcal{A}$, $h \in H$,
$$ 
M(t,A)(h)-M(s,A)(h)
=
\sum_{k=1}^\infty \inner{h}{e_k} 
\int_s^t \int_A a(r,u)\,\widetilde N_k(dr,du).
$$
Since each $\widetilde N_k$ has independent increments on disjoint time intervals
and the family $(\widetilde N_k)$ is independent, it follows that for any finite collection
$h_1,\dots,h_m\in H$, the vector $(M(t,A)(h_{1}), \dots, M(t,A)(h_{n}))$,
has independent increments. Hence $M$ has weakly independent increments.
 
Let $\mathcal{O}$ be a non-empty open bounded subset of $\R^{d}$. Let $G=L^{2}(\mathcal{O})$.  Observe that the integrability condition for a stochastic integrand $\Phi$ is the following:
$$
\Exp \int_{0}^{T} \int_{U} |a(s,u)|^2 \norm{\Phi(r,u)}^{2}_{\mathcal{HS}(H,L^{2}(\mathcal{O}))} \,\nu(du)\,ds. < \infty.   $$

It is well-known that with Dirichlet boundary conditions, $A=\Delta$ is the generator for a strongly continuous $C_{0}$-semigroup $(S(t): t \geq 0)$ on $L^{2}(\mathcal{O})$ with domain $\mbox{Dom}(\Delta)=W^{1,2}_{0}(\mathcal{O})$. 

This is an equicontinuous semigroup,  known as the \emph{heat semigroup} and it is defined as: $S(0)=I$ and for each $t>0$, 
\begin{equation*}
(S(t)g) (x) \defeq \frac{1}{(4 \pi t)^{d/2}}  \int_{\mathcal{O}} e^{-\norm{x-y}^{2}/4t} g(y) dy, \quad \forall g \in  L^{2}(\mathcal{O}), \, x \in \mathcal{O}. 
\end{equation*}
Consider equation \eqref{EqSPDE} for $G=L^{2}(\mathcal{O})$, $B(t,g)=0$ and $F(t,u,g) \in \mathcal{HS}(H,L^{2}(\mathcal{O}))$. Then, by Theorem \ref{theoremMarkov} the mild solution 
\begin{align*}
 X_t & = S(t)X_0 + \int_0^t\! \! \int_{U} S(t-s) F(s,u,X_s) M(ds,du) \\
 & = S(t)X_0 + \sum_{k=1}^{\infty} \int_0^t\! \! \int_{U} a(s,u) S(t-s) F(s,u,X_s) e_{k} \,\widetilde N_k(ds,du),    
\end{align*}
 is a Markov process with values in $L^{2}(\mathcal{O})$.  More precisely, 
\begin{align*}
 X_t(x) & = \frac{1}{(4 \pi t)^{d/2}}  \int_{\mathcal{O}} e^{-\norm{x-y}^{2}/4t} X_0(y) dy \\
 & \quad + \frac{1}{(4 \pi t)^{d/2}} \sum_{k=1}^{\infty} \int_0^t\! \! \int_{U} \int_{\mathcal{O}} e^{-\norm{x-y}^{2}/4t} a(s,u) (F(s,u,X_s) e_{k}) (y)\, dy \widetilde{N}_{k}(ds,du) 
\end{align*}
\end{example}

\section{Feller solutions}\label{sectFellerSolutions}

\begin{definition}\label{defiTimeTranslationInvariance}
We say that $M$ has a \emph{time translation invariant distribution} if for every $t>0$, $0 \leq s \leq r $, $h \in H$ and $A \in \mathcal{A}$, $M((s+t,r+t],A)(h)$ and $M((s,r],A)(h)$ have the same distribution.  
\end{definition}

\begin{assumption}
\label{assumptionBF}
From now on, we assume that $B$ and $F$ do not depend on the time variable $t$ and that $M$ has independent increments and a time translation invariant distribution.
\end{assumption}

In this case, equation \eqref{EqSPDE} takes the form
\begin{equation}\label{EqSPDEFeller}
  dX_t = \left[ AX_t + B(X_t)\right]dt + \int_U F(u,X_t) M(dt,du).  
\end{equation}

\begin{theorem}\label{theoHomogeMarkovProcess}
For all $0 \leq s \leq t$, $P_{s,t}=P_{0,t-s}$. In particular, for every $0 \leq s \leq t$, $g \in G$, $\Gamma \in \mathcal{B}(G)$, 
$$
 P(s,g,t,\Gamma)=P(0,g, t-s,\Gamma).    
$$
\end{theorem}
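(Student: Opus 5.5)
The plan is to show that for fixed $g \in G$ and $0 \leq s \leq t$, the random variables $X(t,s;g)$ and $X(t-s,0;g)$ have the same law on $G$. Then $(P_{s,t}\varphi)(g) = \Exp[\varphi(X(t,s;g))] = \Exp[\varphi(X(t-s,0;g))] = (P_{0,t-s}\varphi)(g)$ for every $\varphi \in B_b(G)$, and taking $\varphi = \mathbbm{1}_\Gamma$ gives the statement for the transition probabilities. Since $B$ and $F$ are time independent under Assumption \ref{assumptionBF}, the mild formulation \eqref{eqMildSoluAtT0} for $X(s+r,s;g)$, $r \in [0,t-s]$, reads
$$ X(s+r,s;g) = S(r)g + \int_0^r S(r-v)B(X(s+v,s;g))\,dv + \int_{s}^{s+r}\!\!\int_U S(s+r-v)F(u,X(v,s;g))\,M(dv,du). $$
This is the same equation as the one for $X(r,0;g)$, except that the noise is the shifted noise $\widetilde M(r,A)(h) := M((s,s+r],A)(h)$.

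The key steps are the following. First, check that $\widetilde M$ is again a cylindrical orthogonal martingale-valued measure, with respect to the filtration $(\mathcal{F}_{s+r})_{r \geq 0}$. Its quadratic variation is the shifted one, and for $\operQuadraVari{M}$ and $Q_M$ these objects are simply translated. Second, use the Picard scheme from the proof of Lemma \ref{lemmaExistenUniqueSolutions}. The iterates $X^n(s+\cdot,s;g)$ are obtained from $g$ and the increments $M((s+a,s+b],A)(h)$ by the same measurable operations as the iterates $X^n(\cdot,0;g)$ are obtained from $g$ and $M((a,b],A)(h)$. Show by induction on $n$ that the joint laws of the pair (iterate, noise) coincide. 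For simple integrands this is a statement about finite-dimensional distributions of the increments, and it extends to general integrands by the Itô isometry and $L^2$ limits, as in Lemma \ref{lemmaMeasurabPropStochIntegral}. Third, pass to the limit $n\to\infty$, using $L^2$ convergence (along a subsequence, a.s.) to get equality in law of $X(t,s;g)$ and $X(t-s,0;g)$.

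The main obstacle is the second step. Equality in law of the solutions requires equality of the joint finite-dimensional distributions of the families $\{M((s+a,s+b],A)(h)\}$ and $\{M((a,b],A)(h)\}$, over finitely many times $a<b$, sets $A$ and vectors $h$. Definition \ref{defiTimeTranslationInvariance} only gives equality of one-dimensional marginals. I would close this gap using independent increments (Definition \ref{defiIndepenIncrements}). For a fixed time partition, increments over disjoint time intervals are independent, so the joint law factors into laws over single intervals. Within one interval, the finitely many variables $M((a,b],A_j)(h_k)$ should be reduced to a single one by a Cramér--Wold argument: linear combinations $\sum_k \lambda_k M((a,b],A)(h_k) = M((a,b],A)(\sum_k\lambda_k h_k)$, using linearity in $h$. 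Different $A_j$ are handled by refining to a disjoint family in the ring $\mathcal{A}$, where the combination $\sum_j M((a,b],A_j)(h_j)$ still needs treatment. If this cannot be reduced to the one-dimensional hypothesis, I would interpret the weak independence and invariance assumptions in the finite-dimensional sense indicated in Example \ref{examCylinPoissonIntegrals}, and state that interpretation explicitly. Given joint equality in law of the noise, the iterates have the same law because they are the same measurable functionals of $(g,\text{noise})$, and this completes the induction.
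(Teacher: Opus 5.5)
\textbf{Verdict: same route as the paper, and the gap you flag is genuine; the paper does not close it, and under the stated definitions it cannot be closed.}

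\textbf{Comparison with the paper.} Your route is the paper's. It also shifts the noise, $M^{t}((s,r],A):=M((s+t,r+t],A)$, rewrites the mild equation for $X(t+h,t;g)$ as one driven by $M^{t}$, and concludes ``by uniqueness of solutions''. Your Picard induction on the joint law of (iterate, noise) is what that last step actually requires. The paper instead writes $\overset{d}{=}$ after replacing $M^{t}$ by $M$ while keeping the integrand $X(t+s,t;g)$. That integrand is adapted to $(\mathcal{F}_{t+s})_{s}$, not to the filtration of $M$. The only input the paper uses is the one-dimensional identity $M^{t}((s,r],A)(h)\overset{d}{=}M((s,r],A)(h)$.

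\textbf{Where your reduction stops.} Your Cram\'er--Wold reduction works only for a fixed $A$. There, linearity in $h$ gives the laws of all linear combinations. Moreover, $\Exp[e^{i\lambda\cdot V}\caract_{F}]=\Prob(F)\,\Exp[e^{i\lambda\cdot V}]$ for $F\in\mathcal{F}_{a}$ upgrades componentwise independence of $V=(M((a,b],A)(h_k))_{k}$ from $\mathcal{F}_{a}$ to independence of the whole vector. You need that upgrade to factor the law over time intervals, so state it. For disjoint $C_1,C_2$ and $k_1\neq k_2$, however, $M((a,b],C_1)(k_1)+M((a,b],C_2)(k_2)$ is not of the form $M((a,b],A)(h)$. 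Neither definition controls it.

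\textbf{Counterexample.} The statement is false under the literal Definitions \ref{defiIndepenIncrements} and \ref{defiTimeTranslationInvariance}. Take $H=G=\R$, $U=\{1,2\}$, $\calA=2^{U}$, generator $A=0$, $B=0$, $F(1,g)=1$, $F(2,g)=2$. Let $(\tau_i)$ be the jump times of a unit-rate Poisson process carrying independent marks $(\xi^{1}_{i},\xi^{2}_{i})$. The marks have law $\pi^{(0)}$ if $\tau_i\leq 1$ and $\pi^{(1)}$ if $\tau_i>1$. Set $M(t,\{j\})(h)=h\sum_{\tau_i\leq t}\xi^{j}_{i}$.

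Suppose $\pi^{(0)},\pi^{(1)}$ are centred, finitely supported, satisfy $\int x_1x_2\,d\pi^{(k)}=0$, and give the same laws to $x_1$, $x_2$ and $x_1+x_2$. Then each $M(\cdot,A)(h)$ is a compound Poisson martingale with time-independent jump law. Also $M$ is orthogonal with deterministic $\operQuadraVari{M}$, and both definitions hold; increments are even jointly independent of the past.

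The solution is $X(t,s;g)=g+\sum_{s<\tau_i\leq t}(\xi^{1}_{i}+2\xi^{2}_{i})$. For centred i.i.d.\ $\zeta_i$ independent of $N$, one has $\Exp[(\sum_{i\leq N}\zeta_i)^{3}]=\Exp[N]\,\Exp[\zeta^{3}]$. Hence $P_{1,2}=P_{0,1}$ would force $\int(x_1+2x_2)^{3}\,d\pi^{(0)}=\int(x_1+2x_2)^{3}\,d\pi^{(1)}$.

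Now let $\sigma=\delta_{(-1,0)}-\delta_{(-1,1)}+\delta_{(0,1)}-\delta_{(0,-1)}+\delta_{(1,-1)}-\delta_{(1,0)}$. Let $\pi^{(0)}$ be the average of the uniform law on these six points and the uniform law on $\pm(3^{-1/2},3^{-1/2})$; it is centred and uncorrelated. Put $\pi^{(1)}=\pi^{(0)}+\frac{1}{12}\sigma$. Since $\sigma$ has zero marginals in the directions $(1,0)$, $(0,1)$, $(1,1)$, all requirements hold. Yet $\int(x_1+2x_2)^{3}\,d\sigma=12$, so the two third moments differ.

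\textbf{Consequence for your proof.} Your fallback is necessary, not a matter of interpretation: independence and time invariance must be imposed jointly for finite families $(M((a,b],A_j)(h_j))_{j}$. Under that hypothesis your argument is sound. The shift of $\operQuadraVari{M}$ and $Q_M$ is then harmless, because both are deterministic and time-homogeneous. So the $\Lambda^{2}$-approximations in your induction can be chosen as the same functionals in both settings. Example \ref{examCMVMCylindriLevyProcesses}, where $U$ is a singleton, satisfies this hypothesis.
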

\begin{proof}
Let $t>0$. Define a cylindrical martingale-valued measure $M^{t}$ by 
$M^{t}((s,r],A)\defeq M((s+t,r+t],A)$ $\forall 0 \leq s \leq r$, $A \in \mathcal{A}$. From the corresponding properties of $M$, it is clear that $M^{t}$ has a unique quadratic variation $\operQuadraVari{M^{t}}$. 
In effect, observe that because $M$ has time translation invariant distribution, then for every $h \in H$,  $M^{t}((s,r],A)(h) \overset{d}{=} M((s,r],A)(h)$, hence $\quadraVari{M^{t}(A)(h)}_{s}^{r} = \quadraVari{M(A)(h)}_{s}^{r}$. If $\nu^{t}_{h}$ denotes the intensity measure associated to $M^{t}$, the above arguments shows that  $\forall 0 \leq s \leq r$, $A \in \mathcal{A}$, $h \in H$, we have $\nu_{h}((s,r]\times A)=\nu^{t}_{h}((s,r]\times A)$ $\Prob$-a.e. Then by Theorem 3.11 in \cite{CCFM:SPDE} we have $ \nu_{h} = \nu^{t}_{h}$ for every $h \in H$. Then, by the definition of quadratic variation, we must have $\operQuadraVari{M^{t}}$ exists and indeed $\operQuadraVari{M} = \operQuadraVari{M^{t}} $.  
 Therefore stochastic integrals with respect to $M^{t}$ are well-defined. 

For any given $g \in G$, $h, t \geq 0$, it follows that 
\begin{flalign*}
& X(t+h,t;g) \\
& =  S(h)g + \int_{t}^{t+h}  S(t+h-s)B(X(s,t;g))  ds  + \int_{t}^{t+h} \int_U   S(t+h-s) F(u,X(s,t;g))  M(ds,du)  \\
& =  S(h)g + \int_{0}^{h} \, S(h-s)B(X(t+s,t;g)) \, ds + \int_{0}^{h} \int_U \,  S(h-s) F(u,X(t+s,t;g)) \, M^{t}(ds,du)  \\
& \overset{d}{=}  S(h)g + \int_{0}^{h} \, S(h-s)B(X(t+s,t;g)) \, ds + \int_{0}^{h} \int_U \,  S(h-s) F(u,X(t+s,t;g)) \, M(ds,du) 
\end{flalign*}
By uniqueness of solutions, $X(t,t+h;g)$ and $X(0,h;g)$ have the same distribution. 
\end{proof}

\begin{notation}
From now on we write $(X(t, \xi): t \geq 0)$ instead of $(X(t,0; \xi): t \geq 0)$,  $P_t$ instead of $P_{0,t}$, and $ P_{t}(g,\Gamma)$ instead of $P(0,g, t,\Gamma)$.  
\end{notation}

\begin{theorem}\label{theoTransitionSemigroIsFeller}     
$(P_t : t \geq 0)$ is a $C_0$-semigroup on $C_b(G)$. Moreover, for every $\varphi \in C_b(G)$ and $g\in G$ the mapping $t \mapsto (P_{t} \varphi)(g)$ is continuous.  
\end{theorem}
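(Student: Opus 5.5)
The plan has three parts: show that $P_t$ maps $C_b(G)$ into itself, show the semigroup property, and show continuity in $t$.

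\emph{Feller property.} Let $\varphi \in C_b(G)$, $t \geq 0$, and $g_n \to g$ in $G$. By Lemma \ref{lemmaContiDepenInitialData}, applied with deterministic initial data and $t_0=0$,
$$\Exp \norm{X(t,g_n)-X(t,g)}^2_G \leq C_T \norm{g_n-g}^2_G \to 0.$$
Hence $X(t,g_n)\to X(t,g)$ in probability. We argue by subsequences: every subsequence has a further subsequence converging $\Prob$-a.e. Since $\varphi$ is bounded and continuous, dominated convergence gives $(P_t\varphi)(g_n)\to (P_t\varphi)(g)$ along that sub-subsequence, and therefore along the whole sequence. Boundedness $\norm{P_t\varphi}_\infty \le \norm{\varphi}_\infty$ is immediate, and linearity is clear. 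So $P_t: C_b(G)\to C_b(G)$ is a linear contraction.

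\emph{Semigroup property.} Proposition \ref{propChapmanKolmogorov} gives $P_{0,s}P_{s,s+t}=P_{0,s+t}$, and Theorem \ref{theoHomogeMarkovProcess} gives $P_{s,s+t}=P_{0,t}$. Together these yield $P_sP_t=P_{s+t}$. Also $P_0=I$, since $X(0,0;g)=g$.

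\emph{Continuity in $t$.} Fix $g$ and $\varphi\in C_b(G)$. By Lemma \ref{lemmaMeanSquareContinuity}, applied with $\xi=g$ and $t_0=0$, $X(s,g)\to X(t,g)$ in $L^2$, hence in probability, as $s\to t$. The same subsequence argument with dominated convergence gives $(P_s\varphi)(g)\to(P_t\varphi)(g)$. In particular, taking $t=0$ gives $P_t\varphi\to\varphi$ pointwise as $t\downarrow 0$.

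The main obstacle is the phrase ``$C_0$-semigroup on $C_b(G)$''. Strong continuity in the sup norm generally fails on $C_b(G)$ for infinite-dimensional $G$, so I would interpret $C_0$ in the sense available here: pointwise continuity $t\mapsto (P_t\varphi)(g)$, together with local uniformity on bounded sets where possible. For the latter, the estimates from Lemma \ref{lemmaMeanSquareContinuity}, in the form \eqref{eqContiSolutionInTimeVariable} plus the linear-growth bound $\Exp\norm{X(r,g)}^2\le C(1+\norm{g}^2)$, give
$$\Exp\norm{X(t,g)-g}^2 \le 3\norm{(S(t)-I)g}^2 + o(1)(1+\norm{g}^2).$$
This is uniform on compact sets of $g$. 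With uniform continuity of $\varphi$ on compacts, this yields convergence of $P_t\varphi$ to $\varphi$ uniformly on compact subsets of $G$. I expect this step to be the delicate one. If only the pointwise statement is intended, it reduces to the argument above.
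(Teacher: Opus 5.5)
Your proposal is correct and has the same skeleton as the paper's proof. You use continuous dependence on the initial datum (Lemma \ref{lemmaContiDepenInitialData}) for the Feller property, Chapman--Kolmogorov for the semigroup law, and mean-square continuity (Lemma \ref{lemmaMeanSquareContinuity}) for continuity in $t$.

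The difference is in how you pass from $L^2$-convergence of the solutions to convergence of $\Exp\varphi(\cdot)$. The paper invokes a ``standard density argument'' to reduce to Lipschitz $\varphi$, and then bounds $\abs{(P_t\varphi)(g_1)-(P_t\varphi)(g_2)}\le K\,\Exp\norm{X(t,g_1)-X(t,g_2)}_G$ (and the analogue in $t$). You instead go through convergence in probability, sub-subsequences and dominated convergence, which handles every $\varphi\in C_b(G)$ directly.

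Your route is the more robust one. Bounded Lipschitz functions are sup-norm dense only in the bounded \emph{uniformly} continuous functions, not in $C_b(G)$. So the paper's reduction needs an extra step, such as monotone Lipschitz approximation from above and below (giving upper and lower semicontinuity of $P_t\varphi$), or local uniform approximation combined with tightness of the laws. What the Lipschitz route buys in exchange is an explicit Lipschitz bound for $P_t\varphi$ when $\varphi$ is Lipschitz.

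Two further points in your favour:
\begin{enumerate}
\item You make explicit that $P_sP_t=P_{s+t}$ needs Theorem \ref{theoHomogeMarkovProcess} as well as Proposition \ref{propChapmanKolmogorov}; the paper leaves this implicit.
\item Your caveat about ``$C_0$'' is justified. The paper likewise proves only pointwise continuity of $t\mapsto(P_t\varphi)(g)$, not sup-norm strong continuity.
\end{enumerate}

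One caution on your optional local-uniformity step, which goes beyond what the statement requires. The factor $o(1)(1+\norm{g}^2)$ does not follow from \eqref{eqBoundIntFQ} alone, because its constant term is not multiplied by $t$. To get it, split $F(u,g)=F(u,0)+\bigl(F(u,g)-F(u,0)\bigr)$. Control the second part with \eqref{eqLipIntFQ}, and let the contribution of $F(u,0)$ vanish as $t\downarrow 0$ by dominated convergence.
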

\begin{proof}
    First, we prove that $P_t(C_b(G)) \subseteq C_b(G)$.  Let $\varphi \in C_b(G)$, then
    $$
    \| P_t \varphi \|_{\infty} = \sup_{g \in G} |P_t\varphi(g)| \leq \Exp\left[\sup_{g \in G} |\varphi(X(t,0;g))| \right] < \infty,
    $$
    
    and we have the boundedness. To show continuity on $G$, by a standard density argument, we can assume that $\varphi$  is Lipschitz on $G$ with constant $K$ (see \cite{Heinonen}).   If $g_1, g_2 \in G$, we have
    \begin{align*}
        |P_t\varphi(g_1)-P_t\varphi(g_2)| & \leq \Exp[|\varphi(X(t,0;g_1)) - \varphi(X(t,0;g_2))|] \\ 
        & \leq K \Exp[\|X(t,0;g_1) - X(t,0;g_2)\|_G] 
    \end{align*}
    which together with Lemma \ref{lemmaContiDepenInitialData} prove the continuity. That $(P_{t}: t \geq 0)$ is a semigroup of linear operators on $C_{b}(G)$ is a direct consequence of Proposition \ref{propChapmanKolmogorov}. 

To finalize our proof we must show that for every $\varphi \in C_b(G)$ and $g\in G$ the mapping $t \mapsto (P_{t} \varphi)(g)$ is continuous. 

Let $0 \leq s , t \leq T$ and $g \in G$. Now, assume that $\varphi$  is Lipschitz on $G$ with constant $K$. We have 
    \begin{align*}
        \abs{ (P_t \varphi)(g) - (P_s \varphi)(g) } 
        & \leq  \Exp[ |\varphi(X(t,0;g)) - \varphi(X(s,0;g))| ] \\
        & \leq  K\Exp[\|X(t,0;g) - X(s,0;g)\|_G ]. 
    \end{align*}
    This and Lemma \ref{lemmaMeanSquareContinuity} prove  that the mapping $t \mapsto (P_t \varphi)(g)$ is continuous.  By a density argument one can show that $t \mapsto (P_t \varphi)(g)$ is continuous for $\varphi \in C_{b}(G)$ and $g \in G$.  
\end{proof}

\begin{remark}
If we combine the results of Theorems  \ref{theoHomogeMarkovProcess} and \ref{theoTransitionSemigroIsFeller} we conclude that $(X(t, \xi): t \geq 0)$ is a time homogeneous Markov process with transition function $(P_{t}(\xi,\Gamma): t\geq 0, \Gamma \in \mathcal{B}(G))$  and Feller transition semigroup $(P_{t}: t \geq 0)$.     
\end{remark}

\textbf{A note on the quadratic variation.} We finalize this section with a discussion on the implications of our assumptions for $M$ in the form taken by the quadratic variation $\operQuadraVari{M}$. 

In effect, as we have seen our hypothesis on $M$ of  independent increments and of time translation invariant distributions are essential to prove that the solution to the SPDEs with time homogeneous coefficients is a Feller process. Moreover, by these hypothesis for each $h \in H $ and $A \in \mathcal{A}$ we have $(M(t,A)(h): t \geq 0)$ is a real-valued zero-mean square integrable L\'evy martingale. Therefore, if we denote $\Exp \left[ \abs{M(1,A)(h)}^{2}\right]$ by $\sigma^{2}_{A,h}$, we must have 
\begin{equation} \label{eqDefinMeasureSigmaHA}
\Exp \left[ \abs{M(t,A)(h)}^{2}\right] = t \sigma^{2}_{A,h}
\end{equation}
This way, by the corresponding properties of $M$ as a orthogonal cylindrical martingale-valued measure, we can check that for any given $h \in H$, the set function $\mu_{h}: \mathcal{A} \rightarrow \R$ given by  $\mu_{h}(A) =\sigma^{2}_{A,h}$ defines a $\sigma$-finite measure on $\mathcal{A}$ which can be extended to $\mathcal{B}(U)$. Moreover, as a consequence of \eqref{eqDefinMeasureSigmaHA} we have the following equality for the family of intensity measures of $M$:
\begin{equation}\label{eqIntensiMeasureAsProductMeasure}
    \nu_{h}=\mu_{h} \otimes \mbox{Leb}. 
\end{equation}
Therefore, by Lemma \ref{lemmaProductMeasuresSSup}  we conclude that 
\begin{equation}\label{eqQuadraVariaIsAProductMeasure}
\operQuadraVari{M}= \mu \otimes  \mbox{Leb}, \quad \mbox{ where } \mu=\sup_{\norm{h}=1} \mu_{h}.
\end{equation}
In particular, we have that $\operQuadraVari{M}$ is a deterministic measure.

\begin{example}\label{examCMVMCylindriLevyProcesses}
Let $Z=(Z_{t}: t \geq 0)$ be a cylindrical zero-mean square integrable L\'{e}vy process in $X$, i.e. for every $d \in \N$, $h_{1}, \cdots, h_{d} \in H$ the $\R^{d}$-valued stochastic process $(Z_{t}(h_{1}), \cdots, Z_{t}(h_{d}): t \geq 0)$ is a  L\'{e}vy process in $\R^{d}$, and $\Exp [Z_{t}(h) ] = 0 $ and $\Exp [ \abs{Z_{t}(h)}^{2} ]< \infty$ for every $t \geq 0$, $h \in H$. We assume  that the mapping $Z_{t}: H \rightarrow L^{0} \ProbSpace$ is continuous. 

With the above hypothesis, it follows by Theorem 4.7 in \cite{ApplebaumRiedle:2010} that there exists a positive symmetric operator $Q:H \rightarrow H$, called the the \emph{covariance operator} of $Z$, defined by $\inner{Qh_{1}}{h_{2}}_{H}=\Exp \left[ Z_{1}(h_{1}) Z_{1}(h_{2}) \right] $ $\forall h_1 , h_2 \in H$. The operator $Q$ is linear and continuous by Proposition III.1.1 in \cite{VakhaniaTarieladzeChobanyan}. For every $h \in H$, observe that $\quadraVari{Z(h)}_{t} = t \Exp \left[\abs{Z_{1}(h)}^{2} \right]= t \inner{ Q(h)}{h}_{H}$. 

Consider a one point set $U=\{a\}$, $\mathcal{A}=2^{U}$ and let $M=(M(t,A): t \geq 0, A \in \mathcal{A})$ be defined by 
\begin{equation}\label{eqCMVMContiyliMartingale}
M(t,A)(h)=Z_{t}(h) \delta_{a}(a), \quad \, \forall h \in H, \, t \geq 0, A \in \mathcal{A}.
\end{equation}
We know, by Examples 5.14 and 5.22 in \cite{CCFM:SPDE}, that $M$ is a cylindrical orthogonal martingale-valued measure with family of intensity measures $\nu_h(ds,du)=\inner{Qh}{h}_H ds \delta_{a}(du) $, quadratic variation 
$\operQuadraVari{M}(\omega)(ds,du)=\norm{Q} ds \delta_{a}(du)$ and operator quadratic variation $ Q_{M}(\omega, r, u) = Q/\norm{Q}$. 

Let $Q_{Z}: \Omega \times [0,T] \rightarrow \mathcal{L}(H,H)$ be given by $Q_{Z}(\omega,t)=Q_{M}(\omega, t,a)$.  
We say that $\Phi:\Omega \times [0,T] \rightarrow \mathcal{HS}(H_{Q_{Z}},G)$ is in $\Lambda^{2}(Z,T)$, if $ (\omega,t) \mapsto \Phi(\omega,t) \circ Q^{1/2}_{Z}(\omega,t)(h)$ is  $\mathcal{P}_{T}/\mathcal{B}(G)$-measurable and satisfies
$$ \Exp \int_{0}^{T} \| \Phi(t)\circ Q_{Z}^{1/2}(t)\|^2_{\mathcal{HS}(H,G)} \, dt < \infty. $$
One can easily check from the definition of the stochastic integral in  \eqref{NewDefIntSimpleIntegrand}, that if $\Phi \in \Lambda^{2}(Z,T)$, then $\Phi \in \Lambda^{2}(M,T)$ and 
\begin{equation}\label{eqCompatibleintegrals}
\int_0^t \!\! \int_U \Phi(s)\, M(ds, du) = \int_0^t \Phi(s)\, dZ_{s},    
\end{equation} 
where the integrand in the right-hand side is the (radonifying) integral for cylindrical square integrable martingales as in \cite{MetivierPellaumail} (or as in \cite{DaPratoZabczyk:StochasticEquations, GawareckiMandrekar} for cylindrical Wiener processes). 
It is clear that $M$ has independent increments and a time translation invariant distribution. In comparison with \eqref{eqQuadraVariaIsAProductMeasure} we have  $\mu = \norm{Q} \delta_{a}(du)$.  

Let $A$, $B$ as in Section \ref{sectSolutionsSPDEs} with $B$ satisfying the growth and Lipschitz conditions in Assumption \ref{assumpLipschitz}. Let  $F= ( F(t,g): t\geq 0,  g \in G)$ such that  for all $ t\geq 0$, $g \in G$, $F(t,g) \in \mathcal{HS}(H, G)$, and for every $h \in H$, the mapping $(\omega, t,g) \mapsto F(t,u,g)\circ Q_{Z}^{1/2}(\omega, t)(h)$ is $\calP_{T}  \otimes \calB(G)/\calB(G)$-measurable. Moreover, assume the growth and Lipschitz conditions
\begin{align*}
    & \| F(t, g(t)) \circ Q^{1/2}_{Z} \|_{\mathcal{HS}(H,G)}^2   \leq C_F\left(1 +  \| g(t) \|^2  \right) \\
    & \|( F(t,  g(t)) - F(t, h(t)) )  \circ Q^{1/2}_{Z} \|_{\mathcal{HS}(H,G)}^2  \leq C_F \|  g(t) -h(t)\|^2 
\end{align*}
It is easy to see that the above conditions imply those in Assumption \ref{assumpLipschitz}. 

Consider the following SPDE
\begin{equation}\label{eqSPDECylLevy}
    dX_t = \left[ AX_t + B(t,X_t)\right]dt +  F(t,X_t) dZ_t.    
\end{equation}
Given our above formulation for $M$ and by \eqref{eqCompatibleintegrals} it is clear that \eqref{eqSPDECylLevy} is equivalent to \eqref{EqSPDE}. Hence, the SPDE \eqref{eqSPDECylLevy} has a unique mild (and weak) solution $( X_t: t \geq 0)$ which by Theorem \ref{theoremMarkov} is a Markov process with values in $G$. If we further assume that $B$ and $F$ are independent of the time variable $t$, then by Theorems  \ref{theoHomogeMarkovProcess} and \ref{theoTransitionSemigroIsFeller} we conclude that 
\begin{equation*}
  X_t = S(t)X_0 + \int_0^t S(t-s)B(X_s)ds + \int_0^t\!  S(t-s) F(X_s)dZ_s.      
\end{equation*}
is a time homogeneous Markov process with Feller transition semigroup.
\end{example}

\begin{remark}\label{remaCylinPoissonNoTimeInvariant}
Let $M$ as in Example \ref{examCylinPoissonIntegrals}. We have shown that $M$ has weakly independent increments. However, it might not have a time translation invariant distribution. To see this, observe that for $h \in H$, $0 \leq s<t$, $A \in \mathcal{A}$, we have
$$ \mathbb{E}|M(t,A)(h)-M(s,A)(h)|^2
=
\|h\|^2 \int_s^t \!\! \int_A |a(r,u)|^2\,dr\,\nu(du).$$
If $M$ has a time translation invariant distribution, this quantity would depend only on $t-s$. However, one can choose $a$ so that the above is not satisfied.  

For example, if $a(r,u)=r\,b(u)$ with $b\in L^2(U,\nu)$ nonzero, then
$$\mathbb{E}|X_t(x)-X_s(x)|^2
=
\|x\|^2 \|b\|_{L^2(U,\nu)}^2 \frac{t^3-s^3}{3},$$
which is not a function of $t-s$.
\end{remark}

\section{Time regularity for the solutions}\label{sectTimeRegularity}

In this section we prove that under additional conditions on the $C_{0}$-semigroup there exists a c\`adl\`ag version for the solution to \eqref{EqSPDE}. 

We start with the following Kotelenez type inequality for the stochastic convolution. Recall that $(S(t): t \geq 0)$ is called a \emph{quasi-contraction semigroup} if there exists $\theta \geq 0$ such that $\norm{S(t)}_{\mathcal{L}(G)}\leq e^{\theta t}$ for all $t \geq 0$. 

\begin{theorem}\label{theoKotelenez}
Assume $(S(t): t \geq 0)$  is a quasi-contraction semigroup. For any $F \in \Lambda^2(M, T)$, $C > 0$ and countable $D \subseteq [0, T]$,  
\begin{multline}\label{eqKotelenezInequality}
 \Prob \left( \sup_{t \in D} \norm{ \int_0^t \!\! \int_U S(t - r) \, F(r, u) \, M(dr, du) }_{G}^2 > C \right) \\ 
 \leq \frac{e^{2\theta T}}{C^2} \, \Exp \int_0^T \!\! \int_U \norm{ F(r, u) \, Q_M^{1/2} }_{\mathcal{HS} (H, G)}^2 \, \langle\!\langle M \rangle\!\rangle(dr, du).    
\end{multline}
\end{theorem}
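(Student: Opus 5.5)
The approach is the classical Kotelenez argument. We first reduce to a contraction semigroup and dilate it to a unitary group on a larger Hilbert space. This lets us pull the $t$-dependence of the convolution kernel out of the stochastic integral, so that the stochastic convolution is dominated by the norm of a genuine $K$-valued square integrable martingale. Doob's maximal inequality and the It\^{o} isometry \eqref{eqItoIsometrySimpleIntegrands} then finish the proof. We read the event in \eqref{eqKotelenezInequality} as $\{\sup_{t\in D}\norm{\cdot}_G > C\}$; that is the form for which the constant $e^{2\theta T}/C^{2}$ comes out of Doob's inequality. For the squared-norm version one replaces $C$ by $\sqrt{C}$.

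\textbf{Step 1 (rescaling and dilation).} Put $T(t)=e^{-\theta t}S(t)$. By the quasi-contraction hypothesis this is a $C_0$-semigroup of contractions on $G$. By the Sz.-Nagy dilation theorem there exist a Hilbert space $K\supseteq G$ (separable, since $G$ is), a strongly continuous unitary group $(V(t):t\in\R)$ on $K$, and the orthogonal projection $P:K\to G$ such that $T(t)g=PV(t)g$ for $g\in G$, $t\ge 0$. Consequently, for $0\le r\le t$,
$$S(t-r)=e^{\theta t}\,P\,V(t)\,V(-r)\,e^{-\theta r} \quad\text{on } G .$$

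\textbf{Step 2 (factorisation of the convolution).} Define $\Psi(r,u)=e^{-\theta r}V(-r)F(r,u)\in\mathcal{HS}(H,K)$. By strong continuity of $V$, the map $(\omega,r,u)\mapsto \Psi(\omega,r,u)h$ is $\mathcal{P}_T\otimes\mathcal{B}(U)$-measurable. Since $V(-r)$ is unitary and $e^{-\theta r}\le 1$, we have $\norm{\Psi\, Q_M^{1/2}}_{\mathcal{HS}(H,K)}\le \norm{F Q_M^{1/2}}_{\mathcal{HS}(H,G)}$, so $\Psi\in\Lambda^2(M,T)$ with $K$ in place of $G$. Let $N_t=\int_0^t\int_U\Psi(r,u)\,M(dr,du)$, a $K$-valued square integrable martingale. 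We then claim that, $\Prob$-a.e. for each $t\in D$,
$$\int_0^t\!\!\int_U S(t-r)F(r,u)\,M(dr,du)=e^{\theta t}\,P V(t)\,N_t .$$
This is the step I expect to need the most care. The plan is to verify it first for simple $F$ of the form \eqref{eqSimpleIntegrands}. There a fixed bounded operator $L=PV(t)$ commutes with the integral directly from \eqref{NewDefIntSimpleIntegrand}, since $(LY,g)=(Y,L^*g)$. However, $V(-r)$ varies with $r$, so one should approximate $\Psi$ by simple integrands and use continuity of $I$ given by the It\^{o} isometry. The cleanest route is to prove $\int L\Phi\,dM=L\int\Phi\,dM$ for bounded $L$ and all $\Phi\in\Lambda^2$ by density. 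Then one checks that $S(t-\cdot)F$ and $e^{\theta t}PV(t)\Psi$ coincide as integrands on $[0,t]$. Since $D$ is countable, the resulting null sets can be collected into a single one.

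\textbf{Step 3 (maximal inequality).} From Step 2, $\norm{P V(t)}\le 1$ gives $\sup_{t\in D}\norm{\int_0^t\int_U S(t-r)F\,dM}_G\le e^{\theta T}\sup_{t\in D}\norm{N_t}_K$ a.s. The process $\norm{N_t}_K$ is a nonnegative submartingale. Doob's inequality on the countable set $D$ (apply it to finite subsets and pass to the monotone limit) yields
$$\Prob\Big(\sup_{t\in D}\norm{N_t}_K>Ce^{-\theta T}\Big)\le \frac{e^{2\theta T}}{C^2}\,\Exp\norm{N_T}_K^2 .$$
Finally, the It\^{o} isometry \eqref{eqItoIsometrySimpleIntegrands} together with the bound from Step 2 gives $\Exp\norm{N_T}^2_K\le \Exp\int_0^T\int_U\norm{F Q_M^{1/2}}^2_{\mathcal{HS}(H,G)}\,\operQuadraVari{M}(dr,du)$, which is \eqref{eqKotelenezInequality}.
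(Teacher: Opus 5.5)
Your argument is correct, and it takes a genuinely different route from the paper. The paper adapts the elementary proof of Proposition 9.18 in Peszat--Zabczyk. For a finite partition $0=t_0<\dots<t_n=T$ it splits $\{\max_k \norm{Y(t_k)}_G>C\}$ into first-passage events and applies Chebyshev to each. It then controls $\sum_k \Exp[\norm{Y(t_k)}_G^2\caract_{\{\norm{Y(t_k)}_G>C\}}\caract_k]$ using four ingredients:
\begin{itemize}
\item the one-step identity $Y(t_k)=S(t_k-t_{k-1})Y(t_{k-1})+\int_{t_{k-1}}^{t_k}\!\int_U S(t_k-r)F(r,u)\,M(dr,du)$;
\item the orthogonality of this increment to $\mathcal{F}_{t_{k-1}}$-measurable variables;
\item the bound $\norm{S(t_k-t_{k-1})}\le e^{\theta(t_k-t_{k-1})}$;
\item an induction on the resulting recursive inequality.
\end{itemize}
That proof never leaves $G$ and needs nothing beyond the semigroup property, the martingale property and the It\^o isometry.

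Your dilation route is essentially the Hausenblas--Seidler argument. It replaces that bookkeeping with three tools: the Sz.-Nagy theorem, the integration theory run in the enlarged separable space $K$, and the commutation of a fixed bounded operator with the integral (Proposition 6.15 of the cited reference, which the paper also uses). After that, Doob's inequality does all the work.

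In return you get more than the statement:
\begin{itemize}
\item Doob's $L^2$ inequality gives the strong-type bound $\Exp\sup_{t\in D}\norm{Y(t)}_G^2\le 4e^{2\theta T}\,\Exp\int_0^T\!\int_U\norm{F(r,u)Q_M^{1/2}}^2_{\mathcal{HS}(H,G)}\operQuadraVari{M}(dr,du)$.
\item The representation $Y(t)=e^{\theta t}PV(t)N_t$ yields a c\`adl\`ag (or continuous) version of the convolution directly from such a version of $N$, because $(t,k)\mapsto V(t)k$ is jointly continuous. This would make the approximation scheme in the proof of Theorem \ref{theoStochConvoluCadlag} unnecessary.
\end{itemize}

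Your reading of the event is also the right one. The paper's own proof bounds $\Prob(\max_k\norm{Y(t_k)}_G>C)$ by $C^{-2}\sum_k I_k$, so it too establishes the non-squared form. With the event $\norm{\cdot}_G^2>C$, the constant would have to be $e^{2\theta T}/C$. Indeed, under $F\mapsto\lambda F$ and $C\mapsto\lambda^2C$ the left side of the displayed inequality is unchanged while the right side tends to $0$.
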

\begin{proof}
We modify to our context the arguments used in the proof of Proposition 9.18 in \cite{PeszatZabczykSPDE}.

Let $Y(t) = \int_0^t \!\! \int_U S(t-r) F(r, u) \, M(dr, du)$. Consider a partition $0 = t_0 < t_1 < \dots < t_n = T$ and take $C > 0$. Then
\begin{align}
\Prob \left( \max_{1 \le k \le n} \| Y(t_k) \|_G > C \right) &= \sum_{k=1}^n \Prob \left( \bigcap_{j=1}^{k-1} \{ \| Y(t_j) \| \le C \} \cap \{ \| Y(t_k) \|_G > C \} \right) \nonumber \\
&\le \frac{1}{C^2} \sum_{k=1}^n I_{k}, \label{eqMaximalKotelenez}
\end{align}
where 
$$I_{k}= \Exp \left[ \| Y(t_k) \|_G^2 \, \caract_{\{ \| Y(t_k) \|_G > C \}} \cdot \caract_k \right], \quad  \mbox{and} \quad \caract_k = \prod_{j=1}^{k-1} \caract_{\{ \| Y(t_j) \| \leq C \}}. $$ 
Using the semigroup property of $(S(t): t \geq 0)$ and the action of linear continuous operators on the stochastic integral (see Proposition 6.15 in \cite{CCFM:SPDE}) we have
\begin{equation*}
Y(t_k) = S(t_k - t_{k-1}) Y(t_{k-1}) + \int_{t_{k-1}}^{t_k} \! \int_U S(t_k - r) \, F(r, u) \, M(dr, du). 
\end{equation*}
By the martingale property of the stochastic integral and the It\^o isometry \eqref{eqItoIsometrySimpleIntegrands},

\begin{flalign*}
&\widetilde{I}_{k} \defeq \Exp \left[ \| Y(t_k) \|_G^2 \, \caract_k \right] \\ &= \Exp \left[ \| S(t_k - t_{k-1}) Y(t_{k-1}) \|_G^2 \, \caract_k \right] 
 + \Exp \left[ \left\| \int_{t_{k-1}}^{t_k} \! \int_U S(t_k - r) \, F(r, u) \, M(dr, du) \right\|_G^2 \, \caract_k \right] \\
&\le e^{2\theta(t_k - t_{k-1})} \left\{ \Exp [ \| Y(t_{k-1}) \|_G^2 \, \caract_{k} ] + \Exp  \int_{t_{k-1}}^{t_k} \!  \int_U \| F(r, u) \, Q_M^{1/2} \|_{\mathcal{HS} (H, G)}^2 \, \operQuadraVari{M}(dr,du)  \right\}.
\end{flalign*}
Setting $\beta_{k}=e^{2\theta(t_k - t_{k-1})}$, 
$$J_{k-1}\defeq \Exp [ \| Y(t_{k-1}) \|_G^2 \, \caract_{k} ], \quad \alpha_k = \Exp   \int_{t_{k-1}}^{t_k} \!  \int_U \| F(r, u) \, Q_M^{1/2} \|_{\mathcal{HS} (H, G)}^2 \, \operQuadraVari{M}(dr,du)  ,$$
the above inequality can be written as
$$ 
\widetilde{I}_{k} \leq \beta_{k} \left(  J_{k-1} +   \alpha_{k}\right).
$$

Since $J_{k-1}+I_{k-1}=\widetilde{I}_{k-1} $, we have
\begin{equation}
\label{eqNobotar}
\widetilde{I}_{k} + \beta_k I_{k-1} \leq \beta_{k} ( \widetilde{I}_{k-1}+\alpha_{k}). 
\end{equation}
By induction we get
\begin{equation}
\label{eqIneqIk}
\widetilde{I}_k + \sum_{j=1}^{k} \beta_{k} \dots \beta_{k-j+1} I_{k-j} \leq \sum_{j=1}^{k} \beta_{k} \dots \beta_{k-j+1} \alpha_{k-j+1}.
\end{equation}
In fact, if \eqref{eqIneqIk} this is true for $k$ then by \eqref{eqNobotar} it follows that
$$
\widetilde{I}_{k+1} + \beta_{k+1} I_{k} \leq \beta_{k+1} \left( 
\sum_{j=1}^{k} \beta_{k} \dots \beta_{k-j+1} \alpha_{k-j+1} -
\sum_{j=1}^{k} \beta_{k} \dots \beta_{k-j+1} I_{k-j} + \alpha_{k+1}
\right)
$$
and rearranging terms we get \eqref{eqIneqIk} for $k+1$.

Since $1 \leq \beta_{k} \dots \beta_{k-j+1} \leq e^{2\theta T}$ for each $j$, \eqref{eqIneqIk} implies
$$
\widetilde{I}_k + \sum_{j=1}^{k} I_{k-j} \leq e^{2\theta T} \sum_{j=1}^{k} \alpha_{k-j+1}.
$$
Now observe that $I_k \leq \widetilde{I}_k$ and take $k=n$ to conclude
$$ 
\sum_{k=1}^{n} I_{k} \leq e^{2T \theta} \sum_{j=1}^{n} \alpha_{j} = e^{2\theta T} \, \Exp \int_0^T \!\! \int_U \| F(r, u) \, Q_M^{1/2} \|_{\mathcal{HS} (H, G)}^2 \, \langle\!\langle M \rangle\!\rangle(dr, du). 
$$

Combining the above inequality with \eqref{eqMaximalKotelenez} we get \eqref{eqKotelenezInequality}. 
\end{proof}

\begin{theorem}\label{theoStochConvoluCadlag}
Assume $(S(t): t \geq 0)$  is a quasi-contraction semigroup. For any $\Phi \in \Lambda^2(M, T)$, the stochastic convolution 
$$ X_{t}=\int_{0}^{t} \!\! \int_{U} \, S(t-r)\Phi(r,u) \, M(dr,du), \quad t \in [0,T],$$
has a square integrable, adapted, c\`adl\`ag version $(Y_{t}: t \in [0,T])$.  Moreover, if for each $A \in \mathcal{A}$ and $h \in H$, the real-valued process $(M(t,A)(h): t \geq 0)$ is continuous, then the results above remain valid replacing the property c\`{a}dl\`{a}g by continuous.
\end{theorem}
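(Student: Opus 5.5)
The plan is the standard approximation argument: approximate $\Phi$ by integrands whose stochastic convolutions are obviously c\`adl\`ag, then use the Kotelenez inequality (Theorem \ref{theoKotelenez}) to get almost surely uniform convergence on $[0,T]$. The uniform limit is then c\`adl\`ag, or continuous when the approximants are continuous.

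\textbf{Step 1: approximating integrands.} Let $A_\theta = A-\theta I$, which generates the contraction semigroup $e^{-\theta t}S(t)$. Let $R_n = n(nI-A)^{-1}$ be the Yosida-type operators; then $R_n g\to g$ for every $g\in G$, $\sup_n\norm{R_n}$ is finite, and $AR_n$ is bounded. Set $\Phi_n = R_n\Phi$.

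By Proposition 6.15 in \cite{CCFM:SPDE}, bounded operators commute with the stochastic integral. Since $R_n\Phi$ takes values in $\mbox{Dom}(A)$, the convolution
$$X^n_t=\int_0^t\!\!\int_U S(t-r)\Phi_n(r,u)\,M(dr,du)$$
satisfies a variation-of-constants identity:
$$X^n_t = \int_0^t S(t-s)AR_n\Big(\int_0^s\!\!\int_U\Phi(r,u)M(dr,du)\Big)ds + R_n\int_0^t\!\!\int_U\Phi(r,u)M(dr,du).$$
This is the stochastic Fubini argument behind the weak/mild equivalence in \cite{CCFM:SPDE}. The second term is $R_n$ applied to a c\`adl\`ag square integrable $G$-valued martingale, so it is c\`adl\`ag. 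It is continuous if each $M(\cdot,A)(h)$ is continuous, by Proposition 6.14 in \cite{CCFM:SPDE}. The first term is a Bochner convolution of a bounded operator against a pathwise bounded measurable function, so it is continuous in $t$. Hence each $X^n$ has a c\`adl\`ag (respectively continuous) adapted version.

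\textbf{Step 2: approximation in $\Lambda^2$.} Next, $\norm{\Phi_n-\Phi}_{\Lambda^2(M,T)}\to0$. Pointwise in $(\omega,r,u)$,
$$\norm{(R_n-I)\Phi\, Q_M^{1/2}}_{\mathcal{HS}}\to 0,$$
since strong convergence on a Hilbert--Schmidt operator gives HS convergence by dominated convergence over an orthonormal basis. This quantity is dominated by $(\sup_n\norm{R_n}+1)\norm{\Phi Q_M^{1/2}}_{\mathcal{HS}}$, so dominated convergence applies with respect to $\Prob\otimes\operQuadraVari{M}$. Choose a subsequence with $\norm{\Phi_{n_k}-\Phi_{n_{k+1}}}^2_{\Lambda^2}\le 8^{-k}$.

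\textbf{Step 3: uniform convergence via Kotelenez.} Let $D$ be a countable dense subset of $[0,T]$ containing $T$. Apply Theorem \ref{theoKotelenez} to $F=\Phi_{n_{k+1}}-\Phi_{n_k}$ with $C=2^{-k}$. The bound in \eqref{eqKotelenezInequality} is as stated, with $C^2$ in the denominator, so the admissible constant choices should be checked against that bound. The resulting probabilities are summable, and Borel--Cantelli shows that $(X^{n_k})$ is almost surely uniformly Cauchy on $D$. Because each $X^{n_k}$ is c\`adl\`ag, the supremum over $D$ equals the supremum over $[0,T]$. So $X^{n_k}$ converges uniformly on $[0,T]$ a.s. to a c\`adl\`ag (respectively continuous) adapted limit $Y$.

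\textbf{Step 4: identification.} By the It\^o isometry, $X^{n_k}_t\to X_t$ in $L^2$ for each fixed $t$. Hence $Y_t=X_t$ a.s., so $Y$ is a version of $X$. Square integrability follows from
$$\Exp\norm{X_t}^2\le e^{2\theta T}\norm{\Phi}^2_{\Lambda^2}.$$

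The main obstacle is Step 1: justifying that $X^n$ is c\`adl\`ag requires the stochastic Fubini identity in this cylindrical setting, or an It\^o-type representation. If that route is awkward, the first thing to try is simple integrands $\Phi$ of the form \eqref{eqSimpleIntegrands} whose $S_{i,j}$ have range in $\mbox{Dom}(A)$. For these the convolution is explicitly
$$\sum \caract_{F_i} S(t-t_i\wedge t)\,Y_{i,j}\ \text{-type terms},$$
and path regularity can be checked by hand. The density of such integrands in $\Lambda^2(M,T)$ is what remains to verify.
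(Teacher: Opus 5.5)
Your argument works, but it takes a genuinely different route from the paper's. The paper never regularizes $\Phi$; it freezes the kernel instead, replacing $S(t-r)$ by $S(t-s_k(r))$ with $s_k$ the dyadic left-endpoint step function. Each approximant is then a finite sum of terms $S(t-a)\int_a^{b\wedge t}\!\int_U\Phi\,dM$, so it is c\`adl\`ag using only three facts: the c\`adl\`ag paths of the integral, Proposition 6.15 of \cite{CCFM:SPDE}, and strong continuity. The paper then shows these approximants are Cauchy uniformly in probability. You regularize the integrand instead, and your key identity $X^n_t=R_nM_t+\int_0^tS(t-s)AR_nM_s\,ds$ is correct. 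It follows from a stochastic Fubini theorem together with $\int_r^tS(t-s)AR_n\,ds=S(t-r)R_n-R_n$, and the Fubini integrability hypothesis is trivial because $AR_n$ is bounded. That Fubini theorem is the one extra ingredient your route needs and the paper's avoids, so you should cite a precise statement for integrals against $M$ rather than only pointing to the weak/mild equivalence. In exchange, your differences $X^{n_{k+1}}-X^{n_k}$ are genuine stochastic convolutions of the $\Lambda^2(M,T)$ integrand $(R_{n_{k+1}}-R_{n_k})\Phi$, so Theorem \ref{theoKotelenez} applies verbatim. The paper's differences instead carry the kernel $S(t-s_m(r))$ and must first be rewritten as $S(t-r)$ times $S(r-s_m(r))-S(r-s_k(r))$ applied to $\Phi$. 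Your choice $C=2^{-k}$ with $\norm{F}^2_{\Lambda^2(M,T)}\le 8^{-k}$ also gives a summable bound under either reading of \eqref{eqKotelenezInequality}. Reading it as a bound on $\Prob(\sup\norm{\cdot}>C)$, which is what its proof gives, the bound is $e^{2\theta T}2^{-k}$. Reading it as printed, as a bound on $\Prob(\sup\norm{\cdot}^2>C)$, is valid for $C\le 1$ and also yields a summable series. Borel--Cantelli then gives almost sure uniform convergence directly, in place of the paper's completeness argument. One caveat: your fallback through simple integrands with range in $\mathrm{Dom}(A)$ would not bypass the obstacle. The kernel $S(t-r)$ still varies with $r$ inside each $(s_i,t_i]$, so that convolution is not just $S(t-t_i\wedge t)$ applied to $Y_{i,j}$. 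The device that really sidesteps Fubini is the paper's discretization of the kernel.
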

\begin{proof} We extend  the arguments used in the proof of Proposition 9.18 in \cite{PeszatZabczykSPDE}.
For $k \in \N$ let $s_k(r) =\sum_{i=0}^{2^{k-1}} \frac{iT}{2^k}\caract_{\left( \frac{iT}{2^k}, \frac{(i+1)T}{2^k} \right]}(r)$.
Define the $G$-valued process
\[
Y^k(t) = \int_0^T \!\! \int_U S(t - s_k(r)) \, \Phi(r, u) \, M(dr, du), \quad \forall t \in [0, T].
\]
Observe that $Y^{k}$ is c\`adl\`ag for each $k \in \N$. In fact, note that if $t \in \left( \frac{iT}{2^k}, \frac{(i+1)T}{2^k} \right]$ for some $0\leq i \leq 2^{k-1}$, we have 
\begin{align*}
Y^k(t) &= S\left(t - \tfrac{iT}{2^k} \right) \int_{\frac{iT}{2^k}}^t \int_U \Phi(r, u) \, M(dr, du) \\
&\quad + \sum_{j=1}^i S\left( t - \tfrac{(j-1)T}{2^k} \right) \int_{\frac{(j-1)T}{2^k}}^{\frac{jT}{2^k}} \int_U \Phi(r, u) \, M(dr, du).
\end{align*}
By the strong continuity of $(S(t): t \geq 0)$, and since the stochastic integral has c\`adl\`ag paths, we conclude that $Y^{k}$ is c\`adl\`ag on $ \left( \frac{iT}{2^k}, \frac{(i+1)T}{2^k} \right]$ for all $0\leq i \leq 2^{k-1}$, threfore it is c\`adl\`ag on $[0,T]$. 

Moreover, notice that
\begin{align*}
\Exp \int_0^T \!\! \int_U & \| (S(t - s_k(r)) - S(t - r)) \, \Phi(r, u) \, Q_M^{1/2} \|_{\mathcal{HS}(H, G)}^2 \, \langle\!\langle M \rangle\!\rangle(dr, du) \\
&\le 2 e^{2\theta t} \, \Exp \int_0^T \!\! \int_U \| \Phi(r, u) \, Q_M^{1/2} \|_{\mathcal{HS}(H, G)}^2 \, \langle\!\langle M \rangle\!\rangle(dr, du) < \infty.
\end{align*}
The strong continuity of $(S(t): t \geq 0)$ and dominated convergence imply that
\[
\lim_{k \to \infty} \Exp \int_0^T \!\! \int_U \| (S(t - s_k(r)) - S(t - r)) \, \Phi(r, u) \, Q_M^{1/2} \|_{\mathcal{HS}(H, G)}^2 \, \langle\!\langle M \rangle\!\rangle(dr, du) = 0.
\]
Then, by the It\^o isometry \eqref{eqItoIsometrySimpleIntegrands} we have
\[
\lim_{k \to \infty} \Exp \left[ \left\| \int_0^T \!\! \int_U (S(t - s_k(r)) - S(t - r)) \, \Phi(r, u) \, M(dr, du) \right\|_G^2 \right] = 0.
\]
Therefore, for every $t \in [0,T]$, $
Y^k(t)$ converges in $L^2(\Omega, \mathcal{F}, \Prob; G)$ to $X_{t}$ as $k \rightarrow \infty$. This way, since the space of c\`agl\`ag processes on $[0,T]$ is complete when equipped with the topology of uniform convergence in probability,  we can prove the existence of a c\`adl\`ag version of $(X_{t}: t \in [0,T])$ if we can show that $(Y^k: t\in [0,T])$ is Cauchy in probability uniformly on $[0,T]$.  To prove it, observe that for $m \geq k$, we have $s_m(r) \geq s_k(r)$, hence 
\begin{align}
Y^m(t) - Y^k(t) &= \int_0^T \!\! \int_U (S(t -s_m(r)) - S(t - s_k(r))) \, \Phi(r, u) \, M(dr, du) \label{eqCauchyCadlagVersion} \\
&= \int_0^T \!\! \int_U S(t - s_m(r)) (I - S(s_m(r) - s_k(r))) \, \Phi(r, u) \, M(dr, du). \nonumber
\end{align}

Let $F^{m,k}(w, r, u) = (I - S(s_m(r) - s_k(r))) \, \Phi(\omega, r, u)$, for $(\omega, r, u )\in \Omega \times [0,T]\times U$, and observe that  
\begin{align*}
\Exp \int_0^T\!\! \int_U & \norm{  F^{m,k}(r, u) \, Q_M^{1/2} }_{\mathcal{HS}(H, G)}^2 \, \langle\!\langle M \rangle\!\rangle(dr, du) \\
&\le (1 + e^{2\theta T}) \, \Exp \int_0^T \!\!  \int_U \norm{ \Phi(r, u) \, Q_M^{1/2} }_{\mathcal{HS}(H, G)}^2 \, \langle\!\langle M \rangle\!\rangle(dr, du) < \infty.
\end{align*}

Hence $F^{m,k} \in \Lambda^2(M, T)$. By \eqref{eqKotelenezInequality}, \eqref{eqCauchyCadlagVersion},  and the fact that both $Y^{m}$ and $Y^{k}$ are c\`adl\`ag, for any $C>0$ we have
\begin{equation*}
\Prob \left( \sup_{0 \le t \le T} \| Y^m(t) - Y^k(t) \|_G^2 > C \right) 
\leq \frac{e^{2\theta T}}{C^2} \, \Exp \int_0^T \!\! \int_U \norm{ F^{m,k}(r, u) \, Q_M^{1/2} }_{\mathcal{HS}(H, G)}^2 \, \langle\!\langle M \rangle\!\rangle(dr, du).
\end{equation*}
Now, the strong continuity of $(S(t):t \geq 0)$ shows that for every $(\omega, r, u )\in \Omega \times [0,T]\times U$, we have 
\[
\lim_{m, k \to \infty} \| F^{m,k}(\omega, r, u) \, Q_M^{1/2}(\omega, r, u) \|_{\mathcal{HS}(H, G)}^2 = 0.
\]
Therefore, by the dominated convergence theorem, we conclude that
\[
\lim_{m, k \to \infty} \Prob \left( \sup_{0 \le t \le T} \| Y^m(t) - Y^k(t) \|_G^2 > C \right) = 0.
\]
This shows that $(Y^k: t\in [0,T])$ is Cauchy in probability uniformly on $[0,T]$, and so there exists a square integrable, adapted, c\`adl\`ag version $(Y_{t}: t \in [0,T])$ of $(X_{t}: t \in [0,T])$.   

Finally, if each $(M(t,A)(h): t \geq 0)$ has continuous paths, then the stochastic integrals defined with respect to $M$ also have continuous paths (Proposition 6.14 in \cite{CCFM:SPDE}). Hence, each $Y^{k}$ has continuous paths. Since the space of continuous processes on $[0,T]$ is closed when equipped with the topology of uniform convergence in probability, from the arguments above we conclude that $(Y_{t}: t \in [0,T])$ is a continuous paths version of $(X_{t}: t \in [0,T])$. 
\end{proof}

\begin{lemma}\label{lemBochnerConvoluContinuous}
Let $(S(t): t \geq 0)$  be a quasi-contraction semigroup on $G$, and $\Psi: \Omega \times \R_+  \to G$ satisfies $\Exp \int_0^T \| \Psi(s) \|_G \, ds < \infty$, then the convolution 
$$ \int_{0}^{t} S(t-r) \Psi(s) \, ds, \quad t \in [0,T],$$
has continuous paths $\mathbb{P}$-a.e. 
\end{lemma}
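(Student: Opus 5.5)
We prove the statement pathwise: continuity of a deterministic convolution integral against a $C_0$-semigroup. The statement has typos ($\Psi(s)$ and $ds$ should read $\Psi(r)$ and $dr$), so we work with $Z(t)=\int_0^t S(t-r)\Psi(r)\,dr$. By Fubini, $\Exp\int_0^T\norm{\Psi(r)}_G\,dr<\infty$ implies that there is a full-measure set $\Omega_0$ on which $\int_0^T\norm{\Psi(\omega,r)}_G\,dr<\infty$. We fix $\omega\in\Omega_0$ and show that $t\mapsto Z(t)$ is continuous on $[0,T]$. The only semigroup facts we use are the bound $\norm{S(t)}_{\mathcal{L}(G)}\le e^{\theta t}\le e^{\theta T}$ on $[0,T]$ and strong continuity. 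Measurability of $\Psi$ is implicitly assumed so that the Bochner integrals make sense.

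The key step is to extend the integrand by zero. Define $f_t(r)=\caract_{[0,t]}(r)S(t-r)\Psi(r)$ for $r\in[0,T]$, so that $Z(t)=\int_0^T f_t(r)\,dr$. Fix $t\in[0,T]$ and a sequence $t_n\to t$ in $[0,T]$. For every $r\neq t$ we have $\caract_{[0,t_n]}(r)\to\caract_{[0,t]}(r)$. When $r<t$, eventually $r<t_n$, and strong continuity of $s\mapsto S(s)g$ with $g=\Psi(r)$ gives $S(t_n-r)\Psi(r)\to S(t-r)\Psi(r)$. When $r>t$, both $f_{t_n}(r)$ and $f_t(r)$ are eventually $0$. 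Hence $f_{t_n}(r)\to f_t(r)$ for all $r\neq t$, that is, for Lebesgue-a.e.\ $r$. The domination $\norm{f_{t_n}(r)}_G\le e^{\theta T}\norm{\Psi(r)}_G$ holds with an integrable bound. The dominated convergence theorem for Bochner integrals then gives $Z(t_n)\to Z(t)$, which proves continuity at every $t$ for every $\omega\in\Omega_0$.

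We expect no real obstacle. The only point needing care is that the naive splitting $Z(t+h)-Z(t)=(S(h)-I)Z(t)+\int_t^{t+h}S(t+h-r)\Psi(r)\,dr$ only yields right continuity easily, while left continuity would require handling $S(t-h-r)$ backwards in time. The zero-extension argument treats both sides at once and avoids this, so we use it. Quasi-contractivity is used only for the uniform bound on $\norm{S}$ on $[0,T]$; any $C_0$-semigroup would work equally well here.
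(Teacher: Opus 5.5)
Your proof is correct and follows the same route as the paper: Fubini gives $\Psi(\omega,\cdot)\in L^1([0,T];G)$ for $\Prob$-a.e.\ $\omega$, and continuity of the deterministic convolution then finishes the argument. The only difference is that the paper cites this deterministic fact as ``identical to the scalar case,'' whereas your zero-extension and dominated-convergence argument proves it using only strong continuity and the bound $\norm{S(t)}\le e^{\theta T}$. That is the appropriate way to do it, since $S$ need not be norm continuous.
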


\begin{proof}
    Since $\Exp \int_0^T \| \Psi(s) \|_G \, ds < \infty$, Fubini's theorem implies that
$\Psi(\cdot,\omega) \in L^1([0,T];G)$ for $\mathbb{P}$-a.e. $\omega$. Therefore, the result follows from the fact that the convolution of a strongly continuous function with an integrable function is continuous. The proof in the Bochner-valued setting is identical to the classical scalar-valued case (for a more general result, see Lemma 1 in \cite{DaPratoKwapienZabczyk:1988}).
\end{proof}

Finally, we show the existence of c\`adl\`g version for the solution to \eqref{EqSPDE}. 

\begin{theorem}\label{theoExistenceCadlagVersion}
Assume $(S(t): t \geq 0)$  is a quasi-contraction semigroup with infinitesimal generator $A$. Let $B$ and $F$ be as in Section \ref{sectSolutionsSPDEs}. Then \eqref{EqSPDE} has a unique c\`adl\`ag  weak solution with initial condition $X_0$.   Moreover, if for each $A \in \mathcal{A}$ and $h \in H$, the real-valued process $(M(t,A)(h): t \geq 0)$ is continuous, then the results above remain valid replacing the property c\`{a}dl\`{a}g by continuous.
\end{theorem}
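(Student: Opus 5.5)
The plan is to start from the unique weak (equivalently mild) solution $X$ given by Theorem 7.8 in \cite{CCFM:SPDE}, with $\Exp \int_0^T \norm{X_t}_G^2\,dt<\infty$. Using the mild representation \eqref{EqStochIntDiffEq}, I split $X_t$ into three pieces and show that each admits a c\`adl\`ag version. Uniqueness is inherited directly, because a version of the weak solution is still a weak solution, and any two c\`adl\`ag versions of the same process are indistinguishable.

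The three pieces are handled as follows.
\begin{itemize}
\item The term $t\mapsto S(t)X_0$ is continuous for every $\omega$ by strong continuity of the semigroup.
\item For the drift term, set $\Psi(s)=B(s,X_s)$. By \eqref{eqBoundBB} we have $\norm{\Psi(s)}_G\le C_B(1+\norm{X_s}_G)$, so Cauchy--Schwarz gives $\Exp\int_0^T\norm{\Psi(s)}_G\,ds<\infty$. Lemma \ref{lemBochnerConvoluContinuous} then shows that $\int_0^t S(t-s)B(s,X_s)\,ds$ has continuous paths a.s.
\item For the stochastic term, set $\Phi(\omega,r,u)=F(r,u,X_r(\omega))$. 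I need to check that $\Phi\in\Lambda^2(M,T)$. Predictability comes from the measurability assumption on $F$ composed with the predictable process $X$. Integrability follows from \eqref{eqBoundIntFQ}, after taking expectations, together with $\Exp\int_0^T\norm{X_s}^2ds<\infty$. Theorem \ref{theoStochConvoluCadlag} then provides a c\`adl\`ag (respectively continuous) version $Y$ of $\int_0^t\int_U S(t-r)\Phi(r,u)\,M(dr,du)$.
\end{itemize}
Define $\widetilde X_t=S(t)X_0+\int_0^tS(t-s)B(s,X_s)\,ds+Y_t$. This process is c\`adl\`ag, and $\widetilde X_t=X_t$ a.s. for each $t$, so it is a version of $X$.

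The main obstacle is checking that $\widetilde X$ is still a (mild and weak) solution, since the definitions require predictability and the coefficients are evaluated along the process. Because $\widetilde X_s=X_s$ a.s. for each fixed $s$, Fubini gives $\widetilde X=X$ $\Prob\otimes ds$-a.e. The Lebesgue integral and, via the It\^o isometry and \eqref{eqLipIntFQ}, the stochastic integral are unchanged when $X$ is replaced by $\widetilde X$ inside $B$ and $F$. Hence $\widetilde X$ satisfies \eqref{EqStochIntDiffEq}, and therefore \eqref{eqDefiWeakSolution}. One subtlety remains: \eqref{eqLipIntFQ} is stated for c\`adl\`ag arguments, while $X$ need not be c\`adl\`ag. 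I would circumvent this by applying the estimate with the c\`adl\`ag $\widetilde X$ and its left-limit process $\widetilde X_{-}$, which coincides with $\widetilde X$ $ds$-a.e. The left-limit process $\widetilde X_{-}$ is predictable and is again a version of the solution, so I would take it as the predictable representative in the weak formulation. For the continuous case, $\widetilde X_{-}=\widetilde X$, and all three pieces are continuous by the final assertion of Theorem \ref{theoStochConvoluCadlag}.
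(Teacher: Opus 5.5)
Your three-term decomposition of the mild solution is exactly the paper's proof: strong continuity for $S(t)X_0$, Lemma \ref{lemBochnerConvoluContinuous} for the drift convolution, and Theorem \ref{theoStochConvoluCadlag} for the stochastic convolution. The paper stops once the mild solution is shown to have a c\`adl\`ag (respectively continuous) version. It never checks whether that version, with the coefficients evaluated along it, is again a solution. So your argument is complete at the paper's level of rigor before your last paragraph.

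Your last paragraph goes beyond the paper, and there the workaround does not work as written.

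\begin{itemize}
\item The left-limit process $\widetilde X_-$ is left-continuous with right limits, not c\`adl\`ag. So \eqref{eqLipIntFQ}, as stated, does not apply to the pair $(\widetilde X,\widetilde X_-)$.
\item More importantly, the comparison you actually need is between $F(\cdot,\cdot,\widetilde X_-)$ and $F(\cdot,\cdot,X)$ in $\Lambda^2(M,T)$. Any chain through $\widetilde X$ still contains a link involving the merely predictable $X$, which is the very obstacle you were trying to avoid. In addition, $F(\cdot,\cdot,\widetilde X)$ need not even be predictable, since $\widetilde X$ is only optional.
\item This link cannot be closed by Fubini alone. Equality $\Prob\otimes ds$-a.e. does not imply equality $\operQuadraVari{M}(\omega)$-a.e., because $\operQuadraVari{M}(\omega)$ need not be absolutely continuous in time. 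Transferring a $ds$-statement to an $\operQuadraVari{M}$-statement is exactly what \eqref{eqLipIntFQ} is for.
\item The same càdlàg restriction already affects your integrability step, where you apply \eqref{eqBoundIntFQ} to $X$.
\end{itemize}

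The honest fix is to use Assumption \ref{assumpLipschitz} for arbitrary predictable paths. The paper does this implicitly when it asserts $F(\cdot,\cdot,X)\in\Lambda^2(M,T)$, and in Lemmas \ref{lemmaContiDepenInitialData} and \ref{lemmaMeanSquareContinuity}. With that reading, $\widetilde X_-=X$ holds $\Prob\otimes ds$-a.e. (pathwise $\widetilde X_-=\widetilde X$ off countably many jump times). Then \eqref{eqLipIntFQ} and the It\^o isometry give equality of the stochastic integrals directly, with no detour through $\widetilde X$.

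Finally, your claim that $\widetilde X_-$ is again a version of the solution needs $\widetilde X_{t-}=\widetilde X_t$ a.s. for each fixed $t$, i.e. no fixed times of discontinuity. This is not automatic for a c\`adl\`ag process. It does follow from the mean-square continuity in Lemma \ref{lemmaMeanSquareContinuity}: along $s_n\uparrow t$, $\widetilde X_{s_n}=X_{s_n}\to X_t$ in $L^2$, while $\widetilde X_{s_n}\to\widetilde X_{t-}$ pathwise. This step should be stated explicitly.
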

\begin{proof}
We already know that the unique weak solution to \eqref{EqSPDE} is its mild solution \eqref{EqStochIntDiffEq}. Hence, it suffices to show that its mild solution has a c\`adl\`ag version. In fact, by the strong continuity of $(S(t):t \geq 0)$  the process $(S(t)X_0: t \geq 0)$ has continuous paths. Now, since $\Exp \int_{0}^{T} \norm{B(s,X_s)}_{G}ds < \infty$, by Lemma \ref{lemBochnerConvoluContinuous} we have $\int_0^t S(t-s)B(s,X_s)ds$ has continuous paths $\Prob$-a.e. Finally, since $(F(t,u,X_t(\omega))) \in \Lambda^{2}(M,T)$ it follows from Theorem \ref{theoStochConvoluCadlag} that the process $\int_0^t\! \! \int_U S(t-s) F(s,u,X_s)M(ds,du)$  has a c\`adl\`ag version (continuous version if each $(M(t,A)(h): t \geq 0)$ has continuous paths). Hence the (mild) solution $(X_{t}: t \in [0,T])$ has a  c\`adl\`ag version (continuous version if each $(M(t,A)(h): t \geq 0)$ has continuous paths).
\end{proof}

\begin{example}
In the setting of Example \ref{examCylinPoissonIntegrals}, the heat semigroup $(S(t): t \geq 0)$ is a $C_{0}$-semigroup of contractions (thus $\theta=0$). Hence, the mild solution $(X_{t}: t \geq 0)$ has a c\`adl\`ag version according to Theorem \ref{theoExistenceCadlagVersion}. 
\end{example}

\begin{example}
In the setting of Example \ref{examCMVMCylindriLevyProcesses}, assume $(S(t): t \geq 0)$ is a quasi-contraction semigroup. By Theorem \ref{theoExistenceCadlagVersion} the mild solution $(X_{t}: t \geq 0)$ to \eqref{eqSPDECylLevy} has a c\`adl\`ag version. Moreover, if $(Z_{t}: t \geq 0)$ is a cylindrical Wiener process, then $(X_{t}: t \geq 0)$ has a continuous version. 
\end{example}

\appendix
\section{A note on the supremum of measures}

In the following result the supremum is taken in the sense of supremum of measures as defined in Section  2 of \cite{CCFM:SPDE}. 

\begin{lemma}\label{lemmaProductMeasuresSSup}
Given a family of measures $(\lambda_j)_{j\in J}$ defined on the space $(\Omega_1,\calF_1)$ and a measure $\lambda$ defined on $(\Omega_2,\calF_2)$, we have
$$
\sup_{j\in J} (\lambda_j\otimes\lambda) = (\sup_{j\in J}\lambda_j) \otimes \lambda
$$
\end{lemma}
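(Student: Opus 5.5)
We need to prove $\sup_j(\lambda_j\otimes\lambda)=(\sup_j\lambda_j)\otimes\lambda$, with the supremum taken in the sense of Section 2 of \cite{CCFM:SPDE}: the smallest measure dominating every member of the family. Write $\Lambda=\sup_j\lambda_j$. The plan is to show that $\Lambda\otimes\lambda$ is an upper bound for the family $(\lambda_j\otimes\lambda)$ and that it lies below every other upper bound. I will assume the $\sigma$-finiteness needed for the product measures to be well defined and unique; this holds in our application, where $\lambda=\mathrm{Leb}$ and $\mu_h$ is $\sigma$-finite.

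\emph{Upper bound.} Since $\lambda_j\le\Lambda$, on rectangles we have $(\lambda_j\otimes\lambda)(E\times F)=\lambda_j(E)\lambda(F)\le \Lambda(E)\lambda(F)$. To pass to all of $\calF_1\otimes\calF_2$, I will not extend an inequality of measures through the rectangles. Instead I will use the formula $(\nu\otimes\lambda)(C)=\int_{\Omega_1}\lambda(C_{x})\,\nu(dx)$, with $C_x$ the section of $C$, and monotonicity of the integral in the measure: $\lambda_j\le\Lambda$ gives $\int f\,d\lambda_j\le\int f\,d\Lambda$ for every nonnegative measurable $f$. Hence $\lambda_j\otimes\lambda\le\Lambda\otimes\lambda$ for all $j$.

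\emph{Minimality.} Let $\zeta$ be a measure on the product with $\lambda_j\otimes\lambda\le\zeta$ for all $j$; I must show $\Lambda\otimes\lambda\le\zeta$. I will use the explicit description of the supremum from Section 2 of \cite{CCFM:SPDE}:
\[
\Lambda(E)=\sup\Bigl\{\sum_{k=1}^n \lambda_{j_k}(E_k)\Bigr\},
\]
where the supremum runs over finite measurable partitions $(E_k)$ of $E$ and indices $j_k\in J$. If that paper only characterizes the supremum as the least upper bound, I will first prove this formula: the right-hand side is countably additive and dominates every $\lambda_j$, and any upper bound dominates it.

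Now fix $F\in\calF_2$ with $\lambda(F)<\infty$ and define $\zeta_F(E)=\zeta(E\times F)$, which is a measure on $\Omega_1$. For a partition $(E_k)$ of $E$,
\[
\lambda(F)\sum_k\lambda_{j_k}(E_k)=\sum_k(\lambda_{j_k}\otimes\lambda)(E_k\times F)\le\sum_k\zeta(E_k\times F)=\zeta_F(E).
\]
Taking the supremum over partitions gives $\Lambda(E)\lambda(F)\le\zeta(E\times F)$. So $\Lambda\otimes\lambda\le\zeta$ holds on measurable rectangles whose second factor has finite $\lambda$-measure.

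\emph{Main obstacle.} The remaining step, and the one I expect to be hardest, is extending this inequality from rectangles to the whole product $\sigma$-algebra; inequalities do not propagate through a $\pi$-$\lambda$ argument the way equalities do. I plan to argue as follows. Finite disjoint unions of rectangles form an algebra generating $\calF_1\otimes\calF_2$. On this algebra the inequality holds by additivity (after exhausting by $F_n\uparrow\Omega_2$ with $\lambda(F_n)<\infty$, using $\sigma$-finiteness of $\lambda$). For general $C$, the Carathéodory outer measure construction gives
\[
(\Lambda\otimes\lambda)(C)=\inf\Bigl\{\sum_n(\Lambda\otimes\lambda)(R_n): C\subseteq\bigcup_n R_n,\ R_n \text{ rectangles}\Bigr\}.
\]
This is valid because $\Lambda\otimes\lambda$ is the unique extension when $\Lambda$ is $\sigma$-finite. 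Comparing with $\zeta\bigl(\bigcup_n R_n\bigr)\le\sum_n\zeta(R_n)$ after disjointifying the cover then yields $\Lambda\otimes\lambda\le\zeta$. If $\Lambda$ fails to be $\sigma$-finite, I would work directly with the partition formula for the supremum on the product space.
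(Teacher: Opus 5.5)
Up to the rectangle inequality $\Lambda(E)\lambda(F)\le\zeta(E\times F)$ your argument is the paper's. Your section formula is a clean way to get the upper bound on all of $\calF_1\otimes\calF_2$. The partition formula is unnecessary: for $0<\lambda(F)<\infty$, the measure $\zeta(\cdot\times F)/\lambda(F)$ dominates every $\lambda_j$, hence dominates $\Lambda$.

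\textbf{The gap.} It lies in the passage from rectangles to $\calF_1\otimes\calF_2$, which the paper also asserts without argument. Your Carath\'eodory comparison runs the wrong way. A cover of $C$ by disjoint algebra sets $R_n$ gives $(\Lambda\otimes\lambda)(C)\le\sum_n\zeta(R_n)=\zeta(\bigcup_n R_n)$. But $\zeta(C)$ is likewise only bounded \emph{above} by $\zeta(\bigcup_n R_n)$. Infimizing therefore yields only $(\Lambda\otimes\lambda)(C)\le\zeta^*(C)$, where $\zeta^*$ is the outer measure generated by $\zeta$ on the algebra. We have $\zeta^*=\zeta$ when $\zeta$ is $\sigma$-finite on the algebra, and an arbitrary upper bound need not be.

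In fact, domination on rectangles does not imply domination. Let $N\subseteq[\frac12,\frac32]$ be closed, nowhere dense, of positive length. Put $C=\{(x,y)\in[0,1]^2:x+y\in N\}$ and $\zeta(D)=\infty\cdot\mathrm{Leb}^2(D\setminus C)$, with $0\cdot\infty=0$. For a Borel rectangle $E\times F\subseteq[0,1]^2$ of positive area, $\mathrm{Leb}^2((E\times F)\setminus C)=\int_{\R\setminus N}(\caract_E*\caract_F)(z)\,dz>0$. This holds because $\caract_E*\caract_F$ is continuous and not identically zero, while $\R\setminus N$ is open and dense. So $\zeta(E\times F)=\infty$, and $\zeta\ge\mathrm{Leb}\otimes\mathrm{Leb}$ on all rectangles. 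Yet $\zeta(C)=0<\int_N(1-|z-1|)\,dz=\mathrm{Leb}^2(C)$.

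\textbf{The repair.} Shrink $\zeta$ to a $\sigma$-finite upper bound before extending. With $\rho=\Lambda\otimes\lambda$, let $\zeta'(C)=\inf\{\zeta(D)+\rho(C\setminus D): D\subseteq C,\ D\in\calF_1\otimes\calF_2\}$, the lattice infimum $\zeta\wedge\rho$.
\begin{itemize}
\item $\zeta'$ is a measure with $\zeta'\le\zeta$ and $\zeta'\le\rho$.
\item $\zeta'\ge\lambda_j\otimes\lambda$ for every $j$, because both $\zeta$ and $\rho$ dominate $\lambda_j\otimes\lambda$ on all of $\calF_1\otimes\calF_2$ (for $\rho$ this is your first step).
\item Your rectangle argument applied to $\zeta'$ gives $\zeta'\ge\rho$ on rectangles, hence $\zeta'=\rho$ there.
\item Since $\rho$ is finite on rectangles $E_n\times F_n\uparrow\Omega_1\times\Omega_2$, uniqueness of extension from the $\pi$-system of rectangles gives $\zeta'=\rho$. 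Therefore $\zeta\ge\rho$.
\end{itemize}
Equivalently, you can run your rectangle argument on $\sup_j(\lambda_j\otimes\lambda)$ itself, which is $\le\rho$ by the first step and hence $\sigma$-finite. Either way, the fix uses the hypothesis $\zeta\ge\lambda_j\otimes\lambda$ on all product sets, not only on rectangles. That is exactly what the rectangle-only extension was missing.
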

\begin{proof}
In fact, let $\mu := \sup_{j\in J}\lambda_j$. It is clear that $\mu \otimes \lambda \geq \lambda_j\otimes\lambda$ for each $j\in J$. On the other hand, if $\nu$ is a measure on $(\Omega_1\times\Omega_2,\calF_1\otimes\calF_2)$ that dominates each $\lambda_j\otimes\lambda$, by fixing $B\in \mathcal{F}_2$ we obtain that the measure $\nu(\cdot\times B)$ dominates each $\lambda(B) \lambda_j$, so
$$
\nu(\cdot \times B) \geq \sup_{j\in J} \lambda(B) \lambda_j = \lambda(B)\mu.
$$
This shows that $\nu$ dominates $\mu\otimes\lambda$ on rectangles, so $\nu \geq \mu \otimes \lambda$.
\end{proof}


\begin{thebibliography}{HD}



\bibitem{AlvaradoFonseca:2021} Alvarado-Solano, A. E.; Fonseca-Mora, C. A.: Stochastic integration in Hilbert spaces with respect to cylindrical martingale-valued measures, \emph{ALEA Lat. Am. J. Probab. Math. Stat. } 18, no. 2, 1267--1295 (2021).


\bibitem{ApplebaumRiedle:2010} Applebaum, D.; Riedle, M.: Cylindrical Lévy processes in Banach spaces,  \emph{Proc. Lond. Math. Soc.}, (3) 101, no. 3, 697--726 (2010).



\bibitem{CCFM:SPDE} Cambronero, S.; Campos, D.; Fonseca-Mora, C.A.; Mena, D: Cylindrical Martingale-Valued Measures, Stochastic Integration and SPDEs, \emph{Stoch PDE: Anal Comp}, 13,no.2, 887--955 (2025). 




\bibitem{DaPratoZabczyk:StochasticEquations}  Da~Prato, G.; Zabczyk, J.:  {\it Stochastic equations in infinite dimensions}, second edition, 
Encyclopedia of Mathematics and its Applications, 152, Cambridge Univ. Press, Cambridge (2014). 

\bibitem{DaPratoKwapienZabczyk:1988} Da Prato, G.; Kwapie\v{n}, S.; Zabczyk, J.: Regularity of solutions of linear stochastic equations in hilbert spaces, \emph{Stochastics}, 23(1), 1–23 (1988). 

\bibitem{GawareckiMandrekar} Gawarecki, L.;  Mandrekar, V.~S.: {\it Stochastic differential equations in infinite dimensions with applications to stochastic partial differential equations}, Probability and its Applications (New York), Springer, Heidelberg (2011).


\bibitem{Heinonen} Heinonen, J.: \emph{Lectures on analysis on metric spaces}, Universitext Springer-Verlag, New York (2001). 


\bibitem{Kotelenez:1982} Kotelenez, P.:
A submartingale type inequality with applications to stochastic evolution equations, \emph{Stochastics}, {8}(2): 139--151 (1982). 

\bibitem{KumarRiedle} Kumar, U.; Riedle, M.: The stochastic Cauchy problem driven by a cylindrical L\'evy process, \emph{Electron. J. Probab.}, {\bf 25}, Paper No. 10, 26 pp. (2020). 

\bibitem{KumarRiedleInvariant} Kumar, U.; Riedle, M.:  Invariant measure for the stochastic Cauchy problem driven by a cylindrical L\'evy process, \emph{J. Math. Anal. Appl.}, {\bf 493}, no.~2, Paper No. 124536, 26 pp. (2021).

\bibitem{LiuZhai} Liu,Y.; Zhai,J.:  Time regularity of generalized Ornstein-Uhlenbeck processes with L\'evy noises in Hilbert spaces, \emph{J. Theoret. Probab.}, {\bf 29}, no.~3, 843--866 (2016). 

 
\bibitem{LiuZhainote} Liu,Y.; Zhai,J.: A note on time regularity of generalized Ornstein-Uhlenbeck processes with cylindrical stable noise, \emph{C. R. Math. Acad. Sci. Paris}, {\bf 350}, no.~1-2, 97--100 (2012). 


\bibitem{MetivierPellaumail} M\'{e}tivier, M; Pellaumail, J.:  \emph{Stochastic integration}. Probability and Mathematical Statistics, Academic Press, New York (1980).

\bibitem{Ondrejat:2005} Ondrej\'at, M.: Brownian representations of cylindrical local martingales,
martingale problem and strong Markov property of weak solutions of SPDEs in Banach spaces, \emph{Czechoslovak Math. J.}, 55(130), no.4, 1003--1039 (2005). 


 \bibitem{PeszatZabczykSPDE} Peszat, S.; Zabczyk, J.: \emph{Stochastic partial differential equations with Lévy noise. An evolution equation approach}. Encyclopedia of Mathematics and its Applications, 113, Cambridge University Press, Cambridge (2007).

 



 \bibitem{PeszatZabczykregularity} Peszat, S.; Zabczyk, J.: Time regularity of solutions to linear equations with L\'evy noise in infinite dimensions, \emph{Stochastic Process. Appl.}, {\bf 123}, no.~3, 719--751 (2013). 

\bibitem{VakhaniaTarieladzeChobanyan} Vakhania, N. N.; Tarieladze, V. I.; Chobanyan, S. A.: \emph{Probability distributions on Banach spaces}.  Mathematics and its Applications (Soviet Series), 14. D. Reidel Publishing Co., Dordrecht (1987).


 \bibitem{Walsh:1986} Walsh, John B.: \emph{An introduction to stochastic partial differential equations}. \'{E}cole d'\'{e}t\'{e} de probabilit\'{e}s de Saint-Flour, XIV—1984, 265–439, Lecture Notes in Math., 1180, Springer, Berlin (1986).

\end{thebibliography}
\end{document}